 \newcommand{\VA}[1]{ \textbf{\textsl{\color{red}[VA: #1]}}}    % for notes to be removed
\DeclareMathOperator{\pr}{pr}  % Projection map
\DeclareMathOperator{\seq}{\subseteq}  % subset
\newcommand{\abar}{{\ensuremath{\bar{a}}}}
\newcommand{\bebar}{{\ensuremath{\bar{\beta}}}}
\newcommand{\cbar}{{\ensuremath{\bar{c}}}}
\newcommand{\bbar}{{\ensuremath{\bar{b}}}}
\DeclareMathOperator{\tp}{tp}  %type
\DeclareMathOperator{\rk}{rk}
\DeclareMathOperator{\acl}{acl}   % algebraic closure operator
\DeclareMathOperator{\dcl}{dcl}   % definable closure operator
\DeclareMathOperator{\td}{td}  %transcendence degree
\DeclareMathOperator{\loc}{Loc}   % (algebraic) locus
\newcommand{\alg}{\ensuremath{\mathrm{alg}}} %as index for alg closure
\DeclareMathOperator{\ldim}{ldim}  %linear dimension
\DeclareMathOperator{\ecl}{ecl} % exponential algebraic closure
\newcommand{\Id}{\ensuremath{\mathrm{Id}}}   % Identity map
\newcommand{\id}{\Id}
\newcommand{\N}{\ensuremath{\mathbb{N}}}
\newcommand{\Z}{\ensuremath{\mathbb{Z}}}
\newcommand{\Q}{\ensuremath{\mathbb{Q}}}
\newcommand{\R}{\ensuremath{\mathbb{R}}}
\newcommand{\C}{\ensuremath{\mathbb{C}}}
\newcommand{\Rexp}{\ensuremath{\mathbb{R}_{\mathrm{exp}}}}
\newcommand{\Cexp}{\ensuremath{\mathbb{C}_{\mathrm{exp}}}}
\newcommand{\Loo}{\ensuremath{L_{\omega_1,\omega}}}
\newcommand{\ga}{\ensuremath{\mathbb{G}_\mathrm{a}}}   %additive group of a field
\newcommand{\gm}{\ensuremath{\mathbb{G}_\mathrm{m}}}  %mult group of a field
\newcommand{\GL}{\ensuremath{\mathrm{GL}}}  %general linear group
\renewcommand{\phi}{\varphi}
\renewcommand{\le}{\ensuremath{\leqslant}}
\renewcommand{\ge}{\ensuremath{\geqslant}}
\newcommand{\class}[2]{\ensuremath{\left\{ #1 \,\left|\, #2 \right.\right\}}}
\newcommand{\subs}{\subseteq} % diagrams package uses subset for hook
\newcommand{\strong}{\ensuremath{\lhd}} % strong embedding - better
\newcommand{\nstrong}{\ensuremath{\not\kern-4pt\lhd\;}} % nonstrong embedding
\newcommand{\gen}[1]{\ensuremath{\left\langle #1 \right\rangle}} 
\newcommand{\hull}[1]{\ensuremath{\lceil #1\rceil}}
\newcommand{\cross}{\ensuremath{\times}}
\newbox\noforkbox \newdimen\forklinewidth
\noforkbox\hbox{\lower 2pt\box1\lower
2pt\box0\relax}
\def\unionstick{\mathop{\copy\noforkbox}\limits}
\def\nonfork_#1{\unionstick_{\textstyle #1}}
\newbox\doesforkbox
\doesforkbox\hbox{\lower 2pt\box1 \lower
2pt\box2\lower2pt\box0\relax}
\def\nunionstick{\mathop{\copy\doesforkbox}\limits}
\def\fork_#1{\nunionstick_{\textstyle #1}}
\newcommand{\leteq}{\mathrel{\mathop:}=}
\theoremstyle{plain}
\newtheorem{theorem}{Theorem}[section]
\newtheorem{lemma}[theorem]{Lemma}
\newtheorem{prop}[theorem]{Proposition}
\newtheorem{proposition}[theorem]{Proposition}
\newtheorem{fact}[theorem]{Fact}
\newtheorem{corollary}[theorem]{Corollary}
\newtheorem{claim}[theorem]{Claim}
\theoremstyle{definition}
\newtheorem{definition}[theorem]{Definition}
\newtheorem*{claim*}{Claim}
\newcommand{\be}{\begin{equation}}
\newcommand{\ee}{\end{equation}}
\newcommand{\xbar}{{\ensuremath{\bar{x}}}}
\newcommand{\ybar}{{\ensuremath{\bar{y}}}}
\newcommand{\gambar}{{\ensuremath{\bar{\gamma}}}}
\newcommand{\EA}{\mathrm{EA}}
\newcommand{\ELA}{\mathrm{ELA}}
\newcommand{\B}{\mathbb{B}}
\newcommand{\Loc}{\mathrm{Loc}}
\DeclareMathOperator{\Span}{span}
\DeclareMathOperator{\bcl}{bcl}
\renewcommand{\leq}{\leqslant}
\renewcommand{\geq}{\geqslant}
\newenvironment{claimproof}{\bgroup\begin{proof}}{\end{proof}\egroup}
\def\Ind#1#2{#1\setbox0=\hbox{$#1x$}\kern\wd0\hbox to 0pt{\hss$#1\mid$\hss}
\lower.9\ht0\hbox to 0pt{\hss$#1\smile$\hss}\kern\wd0}
\def\ind{\mathop{\mathpalette\Ind{}}}
\def\Notind#1#2{#1\setbox0=\hbox{$#1x$}\kern\wd0\hbox to 0pt{\mathchardef
\nn="3236\hss$#1\nn$\kern1.4\wd0\hss}\hbox to 0pt{\hss$#1\mid$\hss}\lower.9\ht0
\hbox to 0pt{\hss$#1\smile$\hss}\kern\wd0}
\title[Algebraic types in exponential fields]{Algebraic types in Zilber's exponential field}
\author{Vahagn Aslanyan}
\address{Department of Mathematics, University of Manchester, Manchester, UK}
\email{Vahagn.Aslanyan@manchester.ac.uk}
\author{Jonathan Kirby}
\address{School of Mathematics, University of East Anglia, Norwich, UK}
\email{Jonathan.Kirby@uea.ac.uk}
\thanks{Most of this work was done while the first author was a senior research associate at the University of East Anglia, where both authors were supported by EPSRC grant EP/S017313/1. The first author has continued to work on this paper at the University of Leeds (supported by Leverhulme Trust Early Career Fellowship ECF-2022-082) and at the University of Manchester (supported by EPSRC Open Fellowship EP/X009823/1). For the purpose of open access, the authors have applied a Creative Commons Attribution (CC BY) licence to any Author Accepted Manuscript version arising from this submission.}
\date{}
\keywords{Exponential field, Zilber's pseudoexponential field, algebraic closure}
\subjclass{03C60 (primary), 03C65, 12L12}
\begin{document}

\begin{abstract}
    We characterise the model-theoretic algebraic closure in Zilber's exponential field. A key step involves showing that certain algebraic varieties have finite intersections with certain finite-rank subgroups of the graph of exponentiation. Mordell-Lang for algebraic tori (a theorem of Laurent) plays a central role in our proof. 
\end{abstract}

\maketitle

\section{Introduction}

Given a complex number $a$ and a subfield $K$ of the complex field, we can ask if $a$ is \emph{algebraic} over $K$, that is, satisfies a non-trivial polynomial equation with coefficients in $K$. If we look purely at the field structure then this is essentially all we can say about $a$ given $K$; in model-theoretic terms, the minimal polynomial of $a$ over $K$ (or the fact that $a$ is transcendental over $K$) completely determines the \emph{type} of $a$ over $K$.
When $a$ is algebraic over $K$, this type has only finitely many realisations: the conjugates of $a$ over $K$.

If we now add the exponential function to the field then this single notion of algebraicity generalises into a number of distinct concepts. 
One could ask for $a$ to satisfy some exponential polynomial equation with coefficients from $K$, but this is not particularly useful, as it neither pins down $a$ to a finite set, nor does this concept even give a closure operator.

The generalisation of algebraicity central to this paper is the one called \emph{model-theoretic algebraicity}: an element $a$ of a structure $M$ is algebraic over a subset $B \subs M$ if it lies in a finite set which is definable by a first-order formula with parameters from $B$. Equivalently, the type of $a$ over $B$ in $M$ has only finitely many realisations, so is said to be an algebraic type. The \emph{(model-theoretic) algebraic closure} $\acl_M(B)$ of $B$ in $M$ is the union of all the finite subsets of $M$ which are definable over $B$. We consider this concept in the case of exponential fields.

An \emph{exponential field} or \emph{E-field} is a field $F$ of characteristic 0 equipped with a homomorphism $\exp$ from its additive to its multiplicative group. We often write $e^z$ instead of $\exp(z)$. If $F$ is algebraically closed as a field, we call it an \emph{EA-field}. If, in addition, the exponential map is surjective (every non-zero element has a logarithm), we call it an \emph{ELA-field}. 
Given any exponential field $F$ and a subset $B \subseteq F$, we have the  exponential subfield $\gen{B}_F^{\mathrm{E}}$ generated by $B$, and also write $\gen{B}_F^{\mathrm{EA}}$ for the smallest  exponential subfield containing $B$ which is relatively algebraically closed in $F$ in the field-theoretic sense. We also write $\gen{B}_F^{\mathrm{ELA}}$ for the smallest E-subfield $K$ containing B which is relatively algebraically closed in $F$ and also closed under logarithms, that is, if $\exp(b) \in K$ then $b \in K$.

In any exponential field $F$, it is immediate that $\acl_F(B) \supseteq \gen{B}_F^{\mathrm{EA}}$.

The main classical examples of exponential fields are $\Rexp$ and $\Cexp$, the real and complex fields equipped with the usual exponentials.
In the celebrated paper \cite{Wilkie96}, Wilkie showed that $\Rexp$ is o-minimal. Every o-minimal field has definable Skolem functions, and so for any $B \subseteq \R$ the algebraic closure $\acl(B)$ is equal to the definable closure $\dcl(B)$ (the union of all singleton sets definable over $B$), and is the smallest elementary substructure of $\Rexp$ containing $B$.

A different generalisation of algebraic closure which is also important for exponential fields is that known as \emph{exponential algebraic closure}, denoted by $\ecl$. For any exponential field $F$ and subset $B$, $\ecl_F(B)$ consists of the components of zeros of functions defined implicitly from exponentials and polynomials (see \cite[Definition 3.2]{Kirby-Schanuel}). In $\Rexp$, $\acl$ coincides with this $\ecl$ (see \cite[Theorem~4.2]{Jones-Wilkie}) but this is not true for most exponential fields.

In the complex exponential field, the characterisation of $\acl$ is, like many model-theoretic questions, completely open. We discuss this further at the end of the paper. One observation we can make is that $\pi$ is in $\acl(\emptyset)$. 
To see this, we observe that the kernel $2\pi i\Z$ of the exponential map is defined by 
\[x \in \ker \text{ iff } e^x=1\]
and the integers $\Z$ are definable as the multiplicative stabiliser of the kernel, that is, by 
\[x \in \Z \text{ iff } \forall y[y \in \ker \to xy \in \ker].\]
The set $\{\pm 2\pi i\}$ is definable as the set of generators of the kernel, via
\[x \in \ker \wedge (\forall y \in \ker)(\exists z \in \Z)[y = zx]\]
and the result follows since $\pi$ is of course field-theoretically algebraic over $2\pi i$.
In \cite{Kirby_Macintyre_Onshuus} this argument is extended to show that $\pi$ is actually pointwise definable. We give the extended argument at the end of this paper.

\medskip

Following his philosophy that $\Cexp$ should be as model-theoretically tame as possible given that the ring $\Z$ is definable, Zilber \cite{Zilb-pseudoexp} introduced a new exponential field $\B$, satisfying many known and desirable properties of $\Cexp$, and conjectured that it is in fact isomorphic to $\Cexp$. In particular, the kernel of the exponential map in $\B$ is of the form $\tau \Z$, with $\tau$ transcendental, and an exponential field of this form is said to have \emph{standard kernel}. Exactly the same argument as above shows that in any exponential field with standard kernel we have $\tau \in \acl(\emptyset)$.

The exponential field $\B$ is characterised by three further properties: the Schanuel Property, Strong Exponential-Algebraic Closedness, and the Countable Closure Property, all of which we explain in the next section. The Schanuel property is the conclusion of Schanuel's conjecture of transcendental number theory. It can be formulated as saying that for any finite tuple $\abar$ from an exponential field $F$, a certain predimension function $\delta(\abar)$ is non-negative. We can relativise this predimension function over a subset $B$ of $F$, and we say that $B$ is \emph{strong} in $F$ if for all finite tuples $\abar$ from $F$, the predimension $\delta(\abar/B)$ is non-negative, and $B$ also contains the kernel of $F$.
It turns out that for any $B \subseteq F$, there is a smallest strong subset of $F$ containing it, called the \emph{hull} of $B$, denoted $\hull{B}_F$. We write $\hull{B}_F^{\mathrm{EA}}$ for $\gen{\hull{B}_F}_F^{\mathrm{EA}}$. In the case $F=\B$, we will drop the subscript $F$ from the above notation and just write $\hull{B}$, $\gen{B}^{\mathrm{E}}$, $\gen{B}^{\mathrm{EA}}$, and $\hull{B}^{\mathrm{EA}}$.
Our main theorem characterises the model-theoretic algebraic closure in $\B$ as follows:
\begin{theorem}\label{thm:main-thm-acl=hull-EA}
For every subset $B \subs \B$ we have
$\acl(B) = \hull{B}^{\mathrm{EA}}$.
\end{theorem}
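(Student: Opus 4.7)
The plan is to prove the two inclusions $\hull{B}^{\mathrm{EA}} \subs \acl(B)$ and $\acl(B) \subs \hull{B}^{\mathrm{EA}}$ separately. The forward inclusion is the substantive part and uses Laurent's theorem as the key tool; the reverse inclusion follows from the strong homogeneity of $\B$ arising from its \Fraisse-style construction and Zilber's axioms.

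For the direction $\hull{B}^{\mathrm{EA}} \subs \acl(B)$, I would first note that the whole kernel $\tau \Z$ is already in $\acl(\emptyset)$: by the argument in the introduction $\tau \in \acl(\emptyset)$, and $n \tau \in \dcl(\tau)$ for each $n \in \Z$. The core claim is that $\hull{B} \subs \acl(B)$. The hull is constructed by iteratively adjoining finite tuples $\abar$ with $\delta(\abar/B') < 0$, where $B'$ is the current approximation; for such a tuple the pair $(\abar, e^{\abar})$ lies on a proper algebraic subvariety $V$ of $\ga^n \times \gm^n$ defined over a finite subset of $B'$. The $\Q$-linear dependencies of $\abar$ modulo $B'$ force $e^{\abar}$ into a finite-rank subgroup $\Gamma$ of $\gm^n$ essentially generated by $e^{B'}$. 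Laurent's theorem then gives that $V \cap \Gamma$ is a finite union of cosets of subgroups of $\Gamma$, which together with the additive linear constraints on $\abar$ and the fact that the kernel is in $\acl(\emptyset)$ pins $\abar$ down to a finite $B$-definable set. Finally, passing to $\hull{B}^{\mathrm{EA}}$ only adds field-algebraic elements, which are always in model-theoretic $\acl$.

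For the direction $\acl(B) \subs \hull{B}^{\mathrm{EA}}$, suppose $a \notin \hull{B}^{\mathrm{EA}} =: H$. I would use the strong homogeneity of $\B$: partial isomorphisms between strong substructures extend to automorphisms of $\B$ fixing any given strong base pointwise. If $\delta(a/H) \geq 1$, then $a$ is generic over $H$ and has uncountably many realisations of the same generic type, all $B$-conjugate to $a$. If $\delta(a/H) = 0$, then $a$ satisfies a witnessing exponential-algebraic equation over $H$; but since $a \notin H$ and $H$ is EA-closed, this equation has infinitely many solutions in $\B$ (for instance, logarithms of a fixed element differ by elements of the kernel, and these can be permuted by automorphisms over $B$), using strong exponential-algebraic closedness to realise them. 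Either way $a$ has infinitely many $B$-conjugates, so $a \notin \acl(B)$.

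The principal obstacle is the first direction: setting up Laurent's theorem correctly requires carefully identifying the subvariety $V$ and the finite-rank subgroup $\Gamma$, and tracking how the additive $\Q$-linear relations on $\abar$ combine with the multiplicative conclusion of Laurent and the kernel in $\acl(\emptyset)$ to give a finite set definable over $B$. This book-keeping is the technical heart of the paper, as signalled in the abstract.
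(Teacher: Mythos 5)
Your overall strategy coincides with the paper's (two inclusions; Laurent's theorem for $\hull{B}^{\EA}\subs\acl(B)$; homogeneity and Strong Exponential-Algebraic Closedness for the converse), but at the two hardest points the proposal asserts rather than proves, and the assertions as stated do not go through. In the forward direction, Laurent's theorem only tells you that $W\cap\Gamma$ (where $W=\pr_{\gm^n}\loc(\abar,e^{\abar}/B)$) is a finite union of sets $\bar\gamma_i H_i\cap\Gamma$ with the $H_i$ possibly \emph{positive-dimensional} subtori; this does not pin $\abar$ down to a finite set, and the additive coordinates can a priori still be translated by kernel elements (points of the form $(a_1+m\tau,\dots)$ can lie in $V^\dagger_{\bbar}$). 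The paper's actual work is exactly here: first the isolation results of Section~3 (needed to know that every realisation of $\tp(\abar/\bbar)$ lies in $V^\dagger_\bbar$, spans the same hull, and is conjugate to $\abar$ by an automorphism over $B$ --- this is also what makes $\Gamma$ definable over $\bbar$), then a reduction via Laurent to the case where $W$ \emph{is} a coset of a subtorus, and then Proposition~4.1, an induction with two genuinely nontrivial cases: a gcd/automorphism-group argument when two multiplicative coordinates are constant, and, when the coset has dimension $n-1$, a minimal-polynomial argument in which a rational matrix must permute the support of the polynomial, forcing the kernel translation $\beta_1$ to vanish. Calling this ``book-keeping'' leaves the theorem unproved; nothing in the proposal substitutes for it.

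In the reverse direction there is a second gap: you apply Strong Exponential-Algebraic Closedness over $H=\hull{B}^{\EA}$ as if the locus of a witnessing tuple over $H$ were automatically free, but $H$ is only an EA-subfield, not an ELA-subfield, and freeness can fail (e.g.\ when $a$ is a logarithm of an element of $H$, giving a multiplicative constraint $e^{a}=b\in H$). Your parenthetical about permuting logarithms by kernel translates handles only that first step; for a general $a\in\gen{H}^{\ELA}\setminus H$ the paper needs the witnessing-sequence lemmas (Lemmas~5.3 and 5.4) and, crucially, the \emph{first} inclusion itself: one shows $a_1\notin\acl(H)$ but $a_1\in\acl(Ha)$ by Proposition~4.2, so $a\notin\acl(H)$. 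This interplay --- the second inclusion genuinely uses the first, as the introduction warns --- is absent from your argument, so the case $\delta(a/H)=0$ with $a$ outside the ELA-closure pattern you describe is not covered.
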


We view this theorem as the first part of a broader project aiming to characterise all the finite rank types in $\B$. The algebraic types correspond to the rank-0 types.

\subsection{Outline of the paper}
In Section \ref{sec: prelim} we explain the other properties defining $\B$ and the hull, and we set up our notation. In Section~\ref{sec: isolation of types} we characterise the types of elements of the hull of a finite tuple over that tuple and show that they are isolated. This plays a crucial role in the rest of the paper and relies on several geometric properties of $\B$.

In order to prove Theorem \ref{thm:main-thm-acl=hull-EA}, we show first that $\acl(B) \supseteq \hull{B}^{\mathrm{EA}}$. This reduces to showing that $\acl(B) \supseteq \hull{B}$, which is done in Section~\ref{sec: hull subset acl}, and comprises the bulk of the work of the paper. This statement can be seen as a ``Diophantine'' statement about some algebraic varieties containing only finitely many points from a certain finite rank subgroup of the graph of the exponential map. This resembles the Mordell-Lang conjecture for tori, a particular case of which states that an algebraic curve living inside an algebraic torus contains only finitely many points from a finite rank subgroup of the torus unless the curve is a coset of a subtorus. Hence it should not come as a surprise that Mordell-Lang plays a crucial role in the proof of the theorem. 

We then show that $\acl(B) \seq \hull{B}^{\mathrm{EA}}$ in Section~\ref{sec: acl subset hull}.
 This inclusion follows from the Hrushovski style amalgamation-with-predimension construction of a countable submodel of $\B$, combined with the first inclusion. The reader familiar with Hrushovski constructions may be aware that in structures obtained by such a construction in certain settings (e.g. Hrushovski's ab initio example) the model-theoretic algebraic closure can be described in terms of the hull. In our setting this yields only a weak version of the second inclusion; some non-trivial work and the first inclusion are required to prove the full case.

In the final Section~\ref{sec: final comments} we compare the algebraic closure with other closure operators in $\B$, including the definable closure and various bounded closure operators, discuss what we can say about algebraic closure in $\Cexp$, and leave some open questions.

\section{Zilber's exponential fields}\label{sec: prelim}

Zilber's exponential field $\B$ is the unique (up to isomorphism) ELA-field of cardinality continuum with standard kernel which satisfies three further properties: the Schanuel Property, Strong Exponential-Algebraic Closedness, and the Countable Closure Property. We explain these properties and the relevant surrounding concepts, roughly following the presentation in \cite{Kirby-pseudo}.

\subsection{The Schanuel Property, predimension, and strong hull}

We begin by introducing the notion of \textit{relative predimension} of a $\Q$-vector space over another. We work in an ambient exponential field $F$. 

Given two $\Q$-vector subspaces $L_1 \seq L_2$ of $F$ such that the quotient $L_2/L_1$ is finite dimensional, we define the \textit{relative predimension} of $L_2$ over $L_1$ as
 \[ \delta(L_2/L_1) := \td (L_2, \exp(L_2) / L_1, \exp(L_1)) - \ldim_{\Q} (L_2/L_1) \]
 where $\td$ and $\ldim$ stand for transcendence degree and linear dimension respectively.
 
 For an arbitrary $\Q$-vector subspace $L_1$ of $F$ and a finite dimensional $\Q$-vector subspace $L$ of $F$ we define
 \[ \delta(L/L_1) := \delta(\Span_{\Q}(L_1\cup L) / L_1). \]

 For two finite tuples $\abar, \bbar$ we let $\delta(\abar/\bbar) := \delta(\Span_{\Q}(\abar) / \Span_{\Q}(\bbar))$.

Given $\Q$-vector subspaces $L_1\seq L_2$ of an exponential field $F$, we say $L_1$ is \textit{strong} in $L_2$ and write $L_1 \strong L_2$ if $\ker(\exp) \cap L_2 \subseteq L_1$ and for all finite dimensional $\Q$-linear subspaces $L\seq L_2$ we have $\delta(L/L_1)\geq 0$. 

For a tuple $\bbar$ from a $\Q$-linear subspace $L$ we say $\bbar$ is strong in $L$, and write $\bbar \strong L$, if $\Span_{\Q}(\bbar) \strong L$.

For a subset $B\seq F$ we define the \emph{strong hull} or just \emph{hull} of $B$ as follows:
    \[ \hull{B}_F := \bigcap \{ L \seq F: L \mbox{ is a $\Q$-vector subspace}, B \cup \ker(\exp) \seq L \mbox{ and } L \lhd F \}. \] 

One can show that the intersection of strong $\Q$-subspaces is again strong, and hence $\hull{B}_F$ is the smallest strong subspace of $F$ containing $B$. As mentioned in the introduction, we write $\hull{B}_F^{\EA}$ for  $\gen{\hull{B}}_F^{\EA}$, the smallest EA-subfield of $F$ which contains $B$ and is strong in $F$.

For a finite-dimensional $\Q$-vector space $L$ we define its \textit{predimension} as \[ \delta(L):=\delta(L/\Span_{\Q}(\ker(\exp)))\]
and likewise for a finite tuple $\abar$ we define $\delta(\abar) \leteq \delta(\Span_\Q(\abar))$.

An exponential field $F$ satisfies the \emph{Schanuel Property} if the conclusion of Schanuel's conjecture holds in $F$, that is, for any $z_1,\ldots,z_n\in F$
  \[\td_{\Q}(z_1,\ldots,z_n, e^{z_1}, \ldots, e^{z_n}) - \ldim_{\Q}(z_1,\ldots,z_n) \geq 0 .\]
For exponential fields $F$ with standard kernel (or trivial kernel), it is easy to see that $F$ satisfies the Schanuel Property if and only if for every finite-dimensional $\Q$-vector subspace $L\seq F$ we have $\delta(L)\geq 0$.
In this case, it easily follows that if $\ldim_\Q B$ is finite then $\ldim_{\Q} \hull{B}_F$ is finite and $\delta(\hull{B}_F)$ is the smallest possible predimension of an extension {(as a finite dimensional $\Q$-vector space)} of $B$ in $F$.

\subsection{Strong Exponential-Algebraic Closedness and the Countable Closure Property} \ 

For an $l \times n$ matrix $M$ of integers we define $M:\ga^n \cross \gm^n \to \ga^l \cross \gm^l$ to be the map given by $M:(\bar{x},\bar{y}) \mapsto (M\xbar, \ybar^M)$ where $(M\xbar)_i = \sum_{j=1}^n m_{ij}x_j \mbox{ and } (\ybar^M)_i = \prod_{j=1}^n y_j^{m_{ij}}.$

 An irreducible subvariety $V \subseteq \ga^n \cross \gm^n$ is \emph{rotund} if for any $1 \leq l \leq n$ and any $l\times n$ matrix $M$ of integers $\dim M(V) \geq \rk M$.

 A subvariety $V$ as above is called \textit{free} if for any $M$ as above with $l=\rk(M) =1$, both the additive and multiplicative projections of $M(V)$ have dimension 1.

Given an exponential field $F$ and a subvariety 
 $V \subs \ga^n \cross \gm^n$, we define
\[V^\dagger_{\bbar} \leteq \{\abar :(\abar,e^{\abar}) \in V \text{and the tuple $\abar$ is $\Q$-linearly independent over $\bbar$}\}.\]

When $F$ has standard kernel, $\Q$ is $\emptyset$-definable in $F$, hence if $V$ is defined (in the field-theoretic sense) over $\gen{\bbar}^\mathrm{E}_F$ then $V^\dagger_{\bbar}$ is a $\bbar$-definable set.

We say an exponential field $F$ is
\emph{Strongly Exponentially-Algebraically Closed} if it is an ELA field and for every \emph{free} and \emph{rotund} subvariety $V\seq \ga^n(F) \cross \gm^n(F)$ of dimension $n$ {defined over $F$} and for every finite tuple $\bbar$ {from $F$}, the set $V_{\bbar}^{\dagger}$ is non-empty.

An exponential field $F$ is said to have the \emph{Countable Closure Property} if for every countable subset $B$, the exponential-algebraic closure $\ecl_F(B)$ is countable. We have not carefully defined $\ecl$, and in fact we will not make any use of this property (save for some comments in section~\ref{sec: final comments}) but for those exponential fields with standard kernel which satisfy the Schanuel Property there is a simpler equivalent formulation:\\ 
For every free and rotund subvariety $V\seq \ga^n(F) \cross \gm^n(F)$ of dimension $n$ and for every finite tuple $\bbar$, the set $V_{\bbar}^{\dagger}$ is countable.

For each uncountable cardinal $\kappa$ there is a unique (up to isomorphism) ELA-field of cardinality $\kappa$ with standard kernel satisfying the Schanuel Property, Strong Exponential-Algebraic Closedness, and the Countable Closure Property \cite{Zilb-pseudoexp,Bays-Kirby-exp}. The one of cardinality $2^{\aleph_0}$ is denoted by $\B$ as already pointed out above.

\section{Isolation of types}\label{sec: isolation of types}

To understand when $a \in \acl(\bbar)$ in $\B$, we need to know something about the types of finite tuples which are realised in $\B$. We are interested in syntactic complete types, that is, $\tp(a/\bbar) = \class{\phi(x,\bbar)}{\B \models \phi(a,\bbar)}$ where $\phi(x,\ybar)$ runs over first-order formulas. The construction of $\B$ is done in a semantic way, so the natural notion of type which arises is a Galois type, that is, an automorphism orbit. As $\B$ is not a saturated, or even homogeneous, model of its first-order theory, Galois types give in general a slightly finer equivalence relation than syntactic types. However, $\B$ is quasiminimal excellent and hence $\aleph_0$-homogeneous over the empty set \cite[Definition 1.1]{Kirby-OQMEC} and it follows that for those types of finite tuples (over a finite tuple of parameters) which are realised in $\B$, the two do coincide. Thus we have the following statement.

\begin{fact}
    If $\abar, \abar' \in \B^n$ and $\bbar \in \B^k$ then $\tp(\abar/\bbar) = \tp(\abar'/\bbar)$ if and only if there is an automorphism of $\B$ fixing $\bbar$ pointwise and mapping $\abar$ to $\abar'$.
\end{fact}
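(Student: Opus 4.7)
The $(\Leftarrow)$ direction is immediate from the definition of an automorphism: if $\sigma\in\Aut(\B)$ fixes $\bbar$ pointwise and sends $\abar$ to $\abar'$, then for every formula $\phi(\xbar,\ybar)$ we have $\B\models\phi(\abar,\bbar)\Longleftrightarrow\B\models\phi(\sigma(\abar),\sigma(\bbar))\Longleftrightarrow\B\models\phi(\abar',\bbar)$, so the two types over $\bbar$ agree. So the real content lies in the $(\Rightarrow)$ direction, and my plan is to reduce it to the $\aleph_0$-homogeneity of $\B$ over $\emptyset$ already cited from \cite{Kirby-OQMEC}.

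The first step is a routine reformulation: the hypothesis $\tp(\abar/\bbar)=\tp(\abar'/\bbar)$ is equivalent to $\tp(\abar\bbar)=\tp(\abar'\bbar)$ as complete types over the empty set. Indeed, any formula $\phi(\xbar,\ybar)$ (without parameters) giving a condition on the concatenated tuple corresponds to the $\bbar$-formula $\phi(\xbar,\bbar)$, and conversely every formula with parameters from $\bbar$ is of the form $\phi(\xbar,\bbar)$ for some parameter-free $\phi$. Hence the two finite tuples $\abar\bbar$ and $\abar'\bbar$ realise the same complete first-order type in $\B$.

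The second step is to apply the homogeneity statement provided by quasiminimal excellence. The definition of $\aleph_0$-homogeneity over $\emptyset$ in the sense of \cite[Definition 1.1]{Kirby-OQMEC} is precisely that any finite partial elementary map from $\B$ to $\B$ extends to an automorphism of $\B$; equivalently, any two finite tuples of the same length realising the same complete type over $\emptyset$ lie in the same $\Aut(\B)$-orbit. Applying this to $\abar\bbar$ and $\abar'\bbar$ produces $\sigma\in\Aut(\B)$ with $\sigma(\abar\bbar)=\abar'\bbar$, which fixes $\bbar$ pointwise and maps $\abar$ to $\abar'$, as required.

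In truth there is no serious obstacle here: the statement is, at bottom, a reformulation of the homogeneity principle already secured by quasiminimal excellence, and the only ``work'' is the observation that a type in parameters $\bbar$ can be absorbed into the tuple so that the parameter-free homogeneity theorem applies. The genuinely nontrivial input—that quasiminimal excellence of $\B$ delivers $\aleph_0$-homogeneity over $\emptyset$ for uncountable models, which does not hold for arbitrary quasiminimal pregeometry structures and requires the excellence axioms—is cited from \cite{Kirby-OQMEC} and is not reproved here.
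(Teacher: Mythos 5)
Your $(\Leftarrow)$ direction and the reduction of $\tp(\abar/\bbar)=\tp(\abar'/\bbar)$ to $\tp(\abar\bbar)=\tp(\abar'\bbar)$ are fine, and your overall strategy (quote the homogeneity coming from quasiminimal excellence) is the same as the paper's. The gap is in the step where you assert that $\aleph_0$-homogeneity over $\emptyset$ in the sense of \cite[Definition 1.1]{Kirby-OQMEC} ``is precisely that any finite partial elementary map from $\B$ to $\B$ extends to an automorphism of $\B$''. That is not what the cited condition says: there, $\aleph_0$-homogeneity over $\emptyset$ (and over countable closed sets) is a condition on partial maps preserving \emph{quantifier-free} types in the quasiminimal pregeometry language, compatibly with the closure operator, and its conclusion is a one-point extension property; it is not stated for first-order elementary maps, and it does not by itself yield automorphisms of the uncountable structure $\B$ --- producing those is exactly what the excellence/back-and-forth machinery is for, as your own closing paragraph concedes, in tension with the body of your argument.

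This matters because the passage from equality of syntactic first-order types to membership in the same $\Aut(\B)$-orbit is precisely the content of the Fact, not a reformulation of a definition: the paper stresses that $\B$ is not a homogeneous model of its first-order theory, and Galois types are in general strictly finer than syntactic types (they differ over infinite parameter sets and for non-realised types), so ``same complete type over $\emptyset$ implies same orbit'' cannot be read off from a homogeneity axiom at the definitional level. To make the argument honest one must convert the hypothesis $\tp(\abar\bbar)=\tp(\abar'\bbar)$ into the data the quasiminimal machinery actually consumes --- for instance an isomorphism of the relevant finitely generated strong partial exponential subfields (hulls, over the kernel), as in the good-basis analysis behind Proposition~\ref{prop: isolating types over strong sets}, or equality of the appropriate quantifier-free types over a closed set containing the kernel --- and only then invoke the extension results to get an automorphism of $\B$. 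As written, your key step simply declares that the cited definition is the statement to be proved (after absorbing parameters), so the $(\Rightarrow)$ direction is circular; the paper is also brief here (``it follows''), but it is leaning on the derived consequences of quasiminimal excellence for realised types over finite tuples, not on a literal reading of Definition 1.1.
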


Since $\tau \in \acl(\emptyset)$, it is convenient to work over tuples containing $\tau$, so from now on we focus on types over such tuples. 

\begin{definition}
   {Let $F$ be an algebraically closed field.  For a subfield $K$ of $F$ and a tuple $\bar{u}$ from  $F$ the \emph{locus} of $\bar{u}$ over $K$, denoted by $\loc(\bar{u}/K)$, is the Zariski closure of $\bar{u}$ over $K$.}
\end{definition}

\begin{prop}\label{prop: isolating types over strong sets}
Let $\bbar$ be a finite tuple from $\B$ containing $\tau$, such that $\bbar \strong \B$.
Suppose $\abar$ is a finite tuple from $\B$ which is $\Q$-linearly independent over $\bbar$ and that $\delta(\abar/\bbar) = 0$.

Then there is a positive integer $m$ such that, after replacing $\abar$ by $\abar/m$, we get that
$\tp(\abar/\bbar)$ is isolated by the formula $\xbar \in V^\dagger_\bbar$, where $V = \loc(\abar,\exp(\abar)/B)$ with $B:=\Q\left( \Span_{\Q}(\bbar), \exp(\Span_{\Q}(\bbar))\right)$.
\end{prop}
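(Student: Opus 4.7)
The plan is to show that any two realizations $\abar,\abar'\in\B$ of the formula $\xbar\in V^\dagger_\bbar$ lie in the same $\Aut(\B/\bbar)$-orbit; by the preceding fact this yields $\tp(\abar/\bbar)=\tp(\abar'/\bbar)$, so the formula isolates the type of its base realization $\abar$.

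First I would extract the geometric content. Since $\abar$ is $\Q$-linearly independent over $\bbar$ of length $n$ and $\delta(\abar/\bbar)=0$, we have $\td(\abar,\exp(\abar)/B)=n$, so $V$ is an irreducible subvariety of $\ga^n\times\gm^n$ of dimension $n$ with $(\abar,\exp(\abar))$ as a generic point over $B$. For any realization $\abar'\in V^\dagger_\bbar(\B)$, the containment $(\abar',\exp(\abar'))\in V$ gives $\td(\abar',\exp(\abar')/B)\leq n$, while $\bbar\strong\B$ combined with the Schanuel Property and the $\Q$-linear independence of $\abar'$ over $\bbar$ force the reverse inequality. Hence $\delta(\abar'/\bbar)=0$, the point $(\abar',\exp(\abar'))$ is also generic in $V$, and the chain rule for $\delta$ gives $\Span_\Q(\bbar\abar)\strong\B$ and $\Span_\Q(\bbar\abar')\strong\B$. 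Genericity supplies a field isomorphism $\sigma_0:B(\abar,\exp(\abar))\to B(\abar',\exp(\abar'))$ over $B$ sending $\abar\mapsto\abar'$ and $\exp(\abar)\mapsto\exp(\abar')$, which extends to the relative algebraic closures inside $\B$.

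The next task is to upgrade $\sigma_0$ to an isomorphism of exponential subfields $\gen{\bbar\abar}^{\mathrm{E}}\to\gen{\bbar\abar'}^{\mathrm{E}}$; this requires pairing $\exp(\abar/k)$ with $\exp(\abar'/k)$ for every positive integer $k$. A priori each is only one of $k^n$ possible $k$-th roots of the corresponding image in $\gm^n$, and $\sigma_0$ does not yet know which root is the one chosen by $\exp$ in $\B$. This is precisely where the $m$-division enters: because $\tau\in\bbar$ forces $\mu_\infty\subs B$, Kummer theory implies that the finitely generated subgroup of $\exp(\Span_\Q(\bbar\abar))$ modulo $B^*$ has only finite torsion, so there is a single positive integer $m$ such that, after replacing $\abar$ by $\abar/m$, the locus $V=\loc((\abar/m,\exp(\abar/m))/B)$ encodes all the torsion data needed for $\sigma_0$ to extend canonically to the full E-subfield. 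A standard back-and-forth argument, using that $\bbar\abar$ and $\bbar\abar'$ are strong in $\B$ and that $\B$ is $\aleph_0$-homogeneous via its quasiminimal excellence, then extends this E-field isomorphism to an automorphism of $\B$ fixing $\bbar$ and sending $\abar$ to $\abar'$.

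The hard part is the existence of the uniform $m$: showing that a single division suffices to resolve all Kummer-theoretic ambiguities in $\exp(\Span_\Q(\bbar\abar))/B^*$ so that the algebraic variety $V$ on its own pins down the E-field isomorphism class. The hypotheses that $\B$ has standard kernel (so $\mu_\infty\subs B$) and that $\dim V=n$ exactly (so $(\abar,\exp(\abar))$ is truly generic in $V$, not a special point) are what make such an $m$ possible.
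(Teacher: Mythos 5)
Your overall strategy coincides with the paper's: show that, after dividing $\abar$ by a suitable $m$, the locus $V$ together with the $\Q$-linear independence condition determines the finitely generated partial E-field extension generated by $\bbar\abar$ over $B$ up to isomorphism, and then use strongness plus the homogeneity of $\B$ (coming from its construction as a Fra\"iss\'e-style limit / its quasiminimal excellence) to extend any such isomorphism to an automorphism fixing $\bbar$. The parts of your sketch surrounding this are sound: for any $\abar'\in V^\dagger_\bbar$, strongness of $\bbar$ gives $\td(\abar',\exp(\abar')/B)\ge n$, so $(\abar',\exp(\abar'))$ is generic in $V$ and $\delta(\abar'/\bbar)=0$; the addition formula for $\delta$ then gives $\Span_\Q(\bbar\abar)\strong\B$ and $\Span_\Q(\bbar\abar')\strong\B$; and an isomorphism of strong finitely generated partial E-subfields does extend to an automorphism of $\B$.

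The problem is that the crucial step is exactly the one you label ``the hard part'' and then do not prove: the existence of a single $m$ such that, after replacing $\abar$ by $\abar/m$, the locus pins down all the division data, i.e.\ the generic field isomorphism $\sigma_0$ extends to a partial E-field isomorphism sending $\exp(q\cdot\abar+r\cdot\bbar)$ to $\exp(q\cdot\abar'+r\cdot\bbar)$ for all rational $q,r$, \emph{uniformly for every realisation} $\abar'$ of the new locus. Your justification --- ``$\mu_\infty\subseteq B$, so the finitely generated subgroup of $\exp(\Span_\Q(\bbar\abar))$ modulo $B^*$ has only finite torsion'' --- is neither a precise statement of what is required nor sufficient to yield it. What is actually needed is a finiteness theorem of Kummer theory for the multiplicative group (of Bashmakov--Ribet type; in this context Zilber's thumbtack lemma), applied to the subgroup generated by $\exp(\bbar\abar)$, and its applicability rests on the multiplicative freeness supplied by strongness and the Schanuel Property, not merely on the presence of all roots of unity in $B$; one must then check that a single $m$ resolves the ambiguity for all $k$-th roots and all realisations simultaneously. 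This is precisely the ``good basis'' result, which the paper does not reprove but imports as Proposition~3.22 (with Definition~3.19) of \cite{Bays-Kirby-exp}. As written, your proposal restates that result rather than proving it, so there is a genuine gap at the core of the argument; it would be closed by invoking the good basis theorem (or the thumbtack lemma) explicitly.
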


\begin{proof}
We give a proof referring to the paper \cite{Bays-Kirby-exp}, which however uses a somewhat different and more general setup, of what are there called $\Gamma$-fields. In our situation, a $\Gamma$-field is a subfield $F$ of $\B$ with a distinguished $\Q$-linear subspace $D = D(F)$ containing $\tau$, and the restriction of $\exp$ to $D$. Furthermore, $F$ should be generated as a subfield of $\B$ by $D \cup \exp(D)$. It is finitely generated in the sense of that paper if and only if $D$ is finite-dimensional as a $\Q$-vector space. In \cite{Kirby-FPEF} and elsewhere, this concept is called a partial exponential field.

With these definitions and notation, $B$ is the $\Gamma$-field with $D(B)$ spanned by $\bbar$. Take $A$ to be its extension with $D(A)$ spanned by $\abar,\bbar$. Then \cite[Proposition~3.22]{Bays-Kirby-exp} says that, after replacing $\abar$ by $\abar/m$ for a suitable $m$, we get that $\abar$ is a \textit{good basis}, which by \cite[Definition~3.19]{Bays-Kirby-exp} means that the extension $A$ is determined up to isomorphism by the locus $V$ of $(\abar,\exp(\abar))$ over $B$ for a basis $\abar$, that is, $V$ together with the information that the basis $\abar$ is $\Q$-linearly independent over $\bbar$.

Now we note that since $\bbar \strong \B$, the extension $B \subseteq A$ is necessarily strong. Further, since $\delta(\abar/\bbar) = 0$, we conclude that $A \strong \B$.

The exponential field $\B$ is constructed as an elementary extension of a countable model $\mathbb F$ \cite[Lemma 5.4]{Kirby-OQMEC}, and this $\mathbb F$ is the Fra\"iss\'e limit of the category of finitely generated $\Gamma$-fields with strong extensions \cite[Theorem~5.9]{Bays-Kirby-exp}. So from the standard homogeneity of Fra\"iss\'e limits, we see that if $A$ and $A'$ are isomorphic strong finitely generated extensions of $B$ within $\mathbb F$ then the isomorphism extends to an automorphism of $\mathbb F$, and hence the corresponding generators of $A$ and $A'$ over $B$ have the same Galois type over $B$, and then in particular they have the same syntactic type over $\bbar$. The proposition follows.
\end{proof}

Now consider the situation where $\bbar$ is a finite tuple from $\B$ which is not strong in $\B$, and $\abar$ is a {$\Q$-linear} basis for $\hull{\bbar}$ over $\bbar$. 
We can assume that $\bbar$ contains $\tau$ and is $\Q$-linearly independent. Then $\abar\bbar$ is a $\Q$-linear basis for $\hull{\bbar}$, and replacing $\abar$ by $\abar/m$ and $\bbar$ by $\bbar/m$ for a suitable $m \in \N$, we may assume it is a good basis.

\begin{prop}\label{prop: isolating types over any sets}
Let $\bbar$ be a finite tuple from $\B$ containing $\tau$ and suppose $\abar\bbar$ is a good basis of $\hull{\bbar}$. Then $\tp(\abar/\bbar)$ is isolated by the formula $\xbar \in V^\dagger_\bbar$, where $V = \loc(\abar,\exp(\abar)/B)$ with $B:=\Q\left( \Span_{\Q}(\bbar), \exp(\Span_{\Q}(\bbar))\right)$. Furthermore, all realisations $\abar'$ of this type are such that the $\Q$-linear span of $\abar'\bbar$ is equal to that of $\abar\bbar$, namely $\hull{\bbar}$.
\end{prop}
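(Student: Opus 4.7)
The plan is to mirror the structure of the proof of Proposition~\ref{prop: isolating types over strong sets}, with the added twist that $B$ is no longer assumed to be strong in $\B$; we address this by first establishing the ``Furthermore'' claim, which then transports us to a situation where the strong-extension machinery applies.

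Fix a realisation $\abar'$ of $\xbar \in V^\dagger_\bbar$. Since $(\abar', \exp(\abar')) \in V = \loc(\abar, \exp(\abar)/B)$, we have $\td(\abar', \exp(\abar')/B) \leq \dim V$, and since $\abar'$ is $\Q$-linearly independent over $\bbar$, this yields $\delta(\abar'/\bbar) \leq \delta(\abar/\bbar)$. By the submodularity of $\delta$ combined with the Schanuel Property, the value $\delta(\abar/\bbar) = \delta(\hull{\bbar}/\Span_\Q(\bbar))$ is the minimum of $\delta(L/\Span_\Q(\bbar))$ as $L$ ranges over finite-dimensional $\Q$-linear extensions of $\Span_\Q(\bbar)$ inside $\B$. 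Applying this lower bound to $L = \Span_\Q(\abar'\bbar)$ forces the equality $\delta(\abar'/\bbar) = \delta(\abar/\bbar)$; applying it instead to an arbitrary finite-dimensional $L \supseteq \Span_\Q(\abar'\bbar)$ gives $\delta(L/\Span_\Q(\abar'\bbar)) \geq 0$ by additivity of $\delta$, so $\Span_\Q(\abar'\bbar) \strong \B$. Since $\hull{\bbar}$ is the smallest strong $\Q$-subspace containing $\bbar$, we get $\hull{\bbar} \subseteq \Span_\Q(\abar'\bbar)$, and comparing dimensions yields the equality $\Span_\Q(\abar'\bbar) = \hull{\bbar}$ promised in ``Furthermore''.

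The equality $\delta(\abar'/\bbar) = \delta(\abar/\bbar)$ also gives $\td(\abar', \exp(\abar')/B) = \dim V$, so by the irreducibility of $V$, the tuple $(\abar', \exp(\abar'))$ is generic in $V$ over $B$ and $V = \loc(\abar', \exp(\abar')/B)$. Letting $A$ and $A'$ denote the $\Gamma$-fields generated inside $\B$ by $\bbar\abar$ and $\bbar\abar'$ respectively (both with $D$-space equal to $\hull{\bbar}$), the extensions $A/B$ and $A'/B$ are both strong and finitely generated. By the good basis property of $\abar\bbar$ (\cite[Definition~3.19]{Bays-Kirby-exp}), the locus $V$ together with the $\Q$-linear independence of $\abar$, respectively $\abar'$, over $\bbar$ determines the isomorphism type of $A/B$, respectively $A'/B$, so there is a $\Gamma$-field isomorphism $A \to A'$ over $B$ sending $\abar$ to $\abar'$. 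Exactly as in the proof of Proposition~\ref{prop: isolating types over strong sets}, Fra\"iss\'e homogeneity of the countable model $\mathbb F$ together with the $\aleph_0$-homogeneity of $\B$ over finite tuples (\cite[Definition~1.1]{Kirby-OQMEC}) extends this isomorphism to an automorphism of $\B$ fixing $\bbar$ pointwise and sending $\abar$ to $\abar'$, so $\tp(\abar/\bbar) = \tp(\abar'/\bbar)$.

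The main obstacle is the predimension minimality step in the second paragraph: one must verify that $\delta(\hull{\bbar}/\Span_\Q(\bbar))$ really is the minimum value of $\delta(L/\Span_\Q(\bbar))$ over finite-dimensional $L \supseteq \Span_\Q(\bbar)$. This follows from combining submodularity of $\delta$ with the characterisation of $\hull{\bbar}$ as the smallest strong $\Q$-subspace containing $\bbar$ (so that if $L_0$ achieves the minimum, then $L_0$ is itself strong and $\hull{\bbar} \subseteq L_0$ forces $\delta(\hull{\bbar}) = \delta(L_0)$). Once this is in place, the rest of the proof is a direct adaptation of the Fra\"iss\'e-theoretic argument in Proposition~\ref{prop: isolating types over strong sets}.
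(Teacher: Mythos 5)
Your second paragraph (the ``Furthermore'' claim) is correct, and in fact it is essentially the paper's own argument (predimension minimality of the hull plus its uniqueness), just run directly on realisations of the formula rather than after isolation is known; that reordering is legitimate. The gap is in the isolation step. You assert that ``the extensions $A/B$ and $A'/B$ are both strong'', and this is false exactly in the nontrivial case: when $\abar$ is nonempty we have $\delta(D(A)/D(B))=\delta(\abar/\bbar)<0$, so $\Span_{\Q}(\bbar)$ is not strong in $\hull{\bbar}$. This failure is precisely why the paper does \emph{not} argue directly over $B$: its proof extracts a subtuple $\bbar'\subseteq\bbar$ containing $\tau$ and otherwise exponentially algebraically independent, so that $\bbar'\strong\B$ and $\delta(\cbar\abar/\bbar')=0$, applies Proposition~\ref{prop: isolating types over strong sets} to $\cbar\abar$ over $\bbar'$, and then specialises the $\cbar$-coordinates. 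So you cannot transplant that proposition's argument ``exactly'' to the extension $A/B$.

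The second, related problem is your appeal to the good-basis hypothesis. What is assumed is that $\abar\bbar$ is a good basis of the $\Gamma$-field $\hull{\bbar}$ (i.e.\ over the base/kernel); by \cite[Definition~3.19]{Bays-Kirby-exp} this controls the isomorphism type of $\hull{\bbar}$ via the locus of the \emph{whole} tuple $(\abar\bbar,\exp(\abar\bbar))$ over the base. It does not say that $\abar$ is a good basis of the (non-strong) extension $A/B$, nor that $\loc(\abar,\exp(\abar)/B)$ together with linear independence determines the isomorphism type of $A$ over $B$ with the given scaling --- that is exactly the delicate divisibility/roots-of-unity point the good-basis rescaling is designed to handle, and it is what needs proof. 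The argument is repairable along your lines: from $\loc(\abar',\exp(\abar')/B)=V$ one gets that $(\abar\bbar,\exp(\abar\bbar))$ and $(\abar'\bbar,\exp(\abar'\bbar))$ have the same locus over the base, so the good-basis property of $\abar\bbar$ yields a $\Gamma$-field isomorphism sending $\abar\bbar\mapsto\abar'\bbar$; being $\Q$-linear on $D$ and commuting with $\exp$ it fixes $\Span_{\Q}(\bbar)\cup\exp(\Span_{\Q}(\bbar))$, hence $B$, pointwise; and what the Fra\"iss\'e homogeneity actually requires is not strongness of $A/B$ but $A,A'\strong\B$, which does hold since $D(A)=D(A')=\hull{\bbar}$ (a fact you never state, having asserted the wrong strongness instead). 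As written, however, the step producing the isomorphism and the strongness claim feeding the homogeneity are unjustified, so the isolation part of your proof has a genuine gap.
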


\begin{proof}
If $\abar$ is non-empty then $\delta(\abar/\bbar)<0$, so we are not in the situation of Proposition~\ref{prop: isolating types over strong sets}. To get to that situation, we take a subtuple $\bbar'$ of $\bbar$ which contains $\tau$ and is otherwise exponentially algebraically independent in $\B$, of maximal length, say $d+1$, and let $\cbar = \bbar \setminus \bbar'$, the rest of the tuple. Then $\bbar' \strong \B$ with $\delta(\bbar') = \delta(\bbar'\cbar\abar) = d$, and $\delta(\cbar\abar/\bbar') = 0$.

Let $U := \loc(\cbar\abar,\exp(\cbar\abar)/\bbar',\exp(\bbar'))$. Then by Proposition~\ref{prop: isolating types over strong sets}, $\tp(\cbar\abar/\bbar')$ is isolated by the formula $\ybar\xbar \in U^\dagger_{\bbar'}$. Evaluating at $\ybar=\cbar$ we see that $\tp(\abar/\bbar'\cbar)$ is isolated by $\xbar \in V^\dagger_{\bbar'\cbar}$, that is, $\xbar \in V^\dagger_\bbar$, as required.

Now suppose that $\abar' \in V^\dagger_\bbar$. Then $\tp(\abar'/\bbar) = \tp(\abar/\bbar)$ so $V = \loc(\abar',\exp(\abar')/\bbar,\exp(\bbar))$, and $\ldim_\Q(\abar'/\bbar) = \ldim_\Q(\abar/\bbar)$. Thus $\delta(\abar'/\bbar) = \delta(\abar/\bbar)$, and by the minimality of this predimension and the uniqueness of the hull, $\abar'$ must also be a basis for $\hull{\bbar}$ over $\bbar$. 
\end{proof}

From the $\aleph_0$-homogeneity of $\B$ over $\emptyset$ as mentioned above, we get:
\begin{corollary}\label{cor: locus -> automorphism}
    Let $\abar, \bbar$, and $B$ be as in Proposition~\ref{prop: isolating types over any sets}. Let $\abar'$ be another tuple such that $ \loc(\abar,\exp(\abar)/B) = \loc(\abar',\exp(\abar')/B).$
    Then there is an automorphism of $\B$ fixing $B$ pointwise and mapping $\abar$ to $\abar'$. \qed
\end{corollary}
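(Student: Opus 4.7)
The plan is to reduce the corollary to Proposition~\ref{prop: isolating types over any sets} combined with the $\aleph_0$-homogeneity Fact stated at the start of this section. The heart of the argument is to show that the hypothesis $\loc(\abar',\exp(\abar')/B) = V$ already forces $\abar' \in V^\dagger_\bbar$; once that is established, the isolating formula $\xbar \in V^\dagger_\bbar$ pins down $\tp(\abar'/\bbar) = \tp(\abar/\bbar)$, homogeneity produces an automorphism fixing $\bbar$, and a short closure argument upgrades ``fixes $\bbar$'' to ``fixes $B$''.

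First I would check that $\abar' \in V^\dagger_\bbar$. Since $(\abar',\exp(\abar'))\in V$ by definition of the locus, the only substantive point is the $\Q$-linear independence of $\abar'$ over $\bbar$. For each nonzero tuple $(q_1,\dots,q_n,r_1,\dots,r_k)$ of rationals, the equation $\sum q_i x_i = \sum r_j b_j$ cuts out a proper $\bbar$-closed subvariety of $\ga^n \cross \gm^n$, and there are only countably many such. Because $\abar$ is $\Q$-linearly independent over $\bbar$, its locus $V$ over $B$ is not contained in any of these subvarieties. But $\abar'$ has the same locus $V$ over $B\supseteq \bbar$, so $(\abar',\exp(\abar'))$ is a generic point of $V$ over $B$ and therefore avoids every such proper subvariety.

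With $\abar' \in V^\dagger_\bbar$ in hand, Proposition~\ref{prop: isolating types over any sets} gives $\tp(\abar'/\bbar) = \tp(\abar/\bbar)$, and the Fact at the top of Section~\ref{sec: isolation of types} furnishes an automorphism $\sigma$ of $\B$ fixing $\bbar$ pointwise with $\sigma(\abar) = \abar'$. To strengthen ``fixes $\bbar$'' to ``fixes $B$'', I would simply note that $\sigma$ fixes the prime field $\Q$ and hence every $\Q$-linear combination of entries of $\bbar$, so it fixes $\Span_\Q(\bbar)$ pointwise; since $\sigma$ commutes with $\exp$, it also fixes $\exp(\Span_\Q(\bbar))$ pointwise, and therefore the field $B = \Q\left(\Span_\Q(\bbar),\exp(\Span_\Q(\bbar))\right)$ is fixed pointwise as required.

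I do not expect a genuine obstruction in this corollary: it is essentially a formal consequence of Proposition~\ref{prop: isolating types over any sets} plus homogeneity. The only mildly delicate step is the generic-point verification of $\Q$-linear independence of $\abar'$ over $\bbar$, and that is where I would focus the actual writing.
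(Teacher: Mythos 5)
Your proposal is correct and follows essentially the same route as the paper, which derives the corollary directly from Proposition~\ref{prop: isolating types over any sets} together with the $\aleph_0$-homogeneity Fact stated at the start of the section. Your genericity check that $\abar'\in V^\dagger_{\bbar}$ (equal locus over $B$ forces $\Q$-linear independence over $\bbar$) and the upgrade from fixing $\bbar$ to fixing all of $B$ (automorphisms fix $\Q$-spans and commute with $\exp$) are exactly the routine details the paper leaves implicit.
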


\section{The strong hull is contained in the algebraic closure}\label{sec: hull subset acl}

The main technical part of the proof is done in the following proposition where we consider a locus of a special form.

\begin{proposition}\label{prop: fin many pairs (M,beta) assuming constant coord}
    Let $\bbar$ be a finite tuple from $\B$ containing $\tau$, and let $\abar \in \B^n$ be a good basis of the strong hull $\hull{\bbar}$. Let also $V \leteq \loc\left(\abar,\exp(\abar)/ B\right) \seq \ga^n(\B) \times \gm^n(\B)$ where $B:=\Q\left( \Span_{\Q}(\bbar), \exp(\Span_{\Q}(\bbar))\right)$. Assume that $W :=\pr_{\gm^n}V$ is a coset of an algebraic subgroup of $\gm^n$. Then $\abar \seq \acl(\bbar)$.
\end{proposition}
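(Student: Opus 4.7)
Proof plan.

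By Proposition~\ref{prop: isolating types over any sets}, $\tp(\abar/\bbar)$ is isolated by the formula $\xbar \in V^\dagger_\bbar$, and every realization $\abar'$ of this type is a basis of $\hull{\bbar}$ over $\bbar$. So $\abar \subs \acl(\bbar)$ is equivalent to $V^\dagger_\bbar(\B)$ being finite, which is what the plan aims to prove.

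\textbf{Coset structure via change of coordinates.} Let $H \leq \gm^n$ be the algebraic subgroup with $W = \exp(\abar) \cdot H$, and set $r = n - \dim H$. The character lattice of $\gm^n/H$ is free of rank $r$, so there exists a unimodular matrix $P \in GL_n(\Z)$ such that, writing $\abar^* := P\abar$, the last $r$ coordinates $\exp(a^*_j)$ (for $j > \dim H$) equal fixed constants $c_j \in B$. Replacing $\abar$ by $\abar^*$ preserves the good-basis property, so I assume $\exp(a_j) = c_j$ for $j > \dim H$ outright. Any realization $\abar'$ of $V^\dagger_\bbar$ then satisfies $\exp(a'_j) = c_j$, forcing $a'_j = a_j + \tau\ell_j$ for some $\ell_j \in \Z$ (since $\ker \exp = \tau\Z$). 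The remaining freedom lies in the first $\dim H$ coordinates of $\abar'$ together with the integer shift vector $\bar\ell \in \Z^r$.

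\textbf{Reducing to a Mordell--Lang intersection.} The condition $(\abar', \exp\abar') \in V$ combined with the parametrization above is a polynomial condition on $(\abar'_1, \exp\abar'_1, \bar\ell)$ over $B$. To control the integer shifts $\bar\ell$, I would set up an auxiliary multiplicative torus in which the admissible shifts correspond to points of intersection of a derived subvariety with a finite rank multiplicative subgroup $\Gamma \leq \gm^N$ generated by $\exp(\hull{\bbar})$ (extended by further relevant constants as needed). Laurent's theorem then classifies this intersection as a finite union of cosets of subgroups of $\Gamma$. Positive-dimensional cosets must be ruled out: each such coset would yield a continuous family of candidate tuples $\abar'$ forced to lie in a proper $\Q$-linear subspace of $\hull{\bbar}$, contradicting either the rigidity of the locus $V$ together with the good-basis hypothesis on $\abar\bbar$, or the minimality of the strong hull $\hull{\bbar}$ (by producing a smaller strong subspace of $\B$ containing $\bbar$). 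Only zero-dimensional cosets survive, giving the desired finiteness.

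\textbf{Main obstacle.} The proposition is precisely the ``coset case'' of Mordell--Lang that the standard Laurent theorem does not settle directly: $W$ is itself a coset of a subgroup, so applying Laurent to $W$ is vacuous. The real work is to construct the correct auxiliary torus and auxiliary subvariety, encoding the additive shift vector $\bar\ell$ multiplicatively, so that Laurent's theorem becomes informative. The delicate step is then to exclude positive-dimensional components of the resulting intersection, using the good-basis condition together with the minimality of $\hull{\bbar}$ to force any such component to collapse. This is where I expect the bulk of the technical work to lie, and where the interaction between the Diophantine (Mordell--Lang) content and the predimension/hull structure of $\B$ is genuinely used.
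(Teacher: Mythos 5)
There is a genuine gap: your write-up is a plan whose central step is exactly the part you leave undone, and that step is where the whole proposition lives. You correctly reduce, via Proposition~\ref{prop: isolating types over any sets} and a unimodular change of coordinates, to the situation where some coordinates satisfy $\exp(a_j)=c_j$ and any realisation differs from $\abar$ by kernel shifts $\tau\ell_j$ in those coordinates. But your proposed continuation --- encode the shift vector $\bar\ell$ multiplicatively in an auxiliary torus and apply Laurent's theorem again, then rule out positive-dimensional cosets by ``rigidity of the locus'' or ``minimality of the hull'' --- is not carried out, and it faces a real obstruction: the shifts $\tau\ell_j$ lie in the kernel of $\exp$, so they are invisible to any multiplicative (torus) coordinates, and Mordell--Lang has nothing to say about them. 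Indeed, in the paper Mordell--Lang is used only in the \emph{next} proposition (Proposition~\ref{prop: acl contains hull}) to reduce the general locus to the coset case; inside the present proposition no Diophantine input is used at all.

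What the paper actually does is an induction on $n$: it suffices (Claim~\ref{claim: one alg element}) to produce a single element of $\Span(\abar,\bbar)\setminus\Span(\bbar)$ lying in $\acl(\bbar)$, after which one adjoins it to $\bbar$ and invokes the induction hypothesis. This is proved in two cases. If at least two coordinates of $W$ are constant, an automorphism-orbit argument (using Corollary~\ref{cor: locus -> automorphism} plus the induction hypothesis that $a_2\in\acl(\bbar a_1)$) together with an elementary gcd computation shows all realisations shift $(a_1,a_2)$ by integer multiples of a single vector $(d_1\tau,d_2\tau)$, so $d_1a_2-d_2a_1$ is $\bbar$-definable. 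If exactly one coordinate is constant, the key device is the minimal polynomial $p(X_1,\cbar_*)$ of $a_1$ over the field generated by all roots $c_k^{1/l}$: the automorphism sending $(a_1,\abar_*)$ to $(a_1+\beta_1, M\abar_*+\bebar_*)$ yields a functional equation among Laurent/Puiseux polynomials, whose support must be permuted-and-translated by the transpose of $M$; iterating and comparing coefficients gives $q(X_1+h!\beta_1)=q(X_1)$ for a nonconstant polynomial $q$, forcing $\beta_1=0$ and hence $a_1\in\acl(\bbar)$. Neither of these mechanisms (the automorphism/gcd step, and above all the polynomial functional-equation and support-permutation argument handling the additive--multiplicative interaction) appears in your proposal, and without them the finiteness of $V^\dagger_\bbar$ is not established.
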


\begin{proof}
We proceed to the proof by induction on $n$, the length of $\abar$. The base case $n=0$ holds vacuously. If $n\geq 1$ then $\delta(\abar/\bbar)<0$ which means $\dim V < n$. In particular, if $n=1$ then $\dim V = 0$ and so $V$ is finite, and the conclusion of the theorem holds. 
 
So let $n\geq 2$ and assume now that for any tuple $\bbar'$ for which 
\begin{itemize}
    \item $l:=\ldim_{\Q}(\hull{\bbar'}/\bbar')<n$, and
    \item $\pr_{\gm^l}\loc\left(\abar',\exp(\abar')/ \Q(\bbar', \exp(\bbar'))^{\alg}\right)$ is a coset of a subgroup of $\gm^l$ where $\abar'\in \B^l$ is a basis of $\hull{\bbar'}$ over $\bbar$,
\end{itemize}
we have $\abar' \seq \acl(\bbar')$.

We need to show that the conclusion holds for $\abar$ and $\bbar$ as in the statement of the proposition. Let $x_1,\ldots,x_n,y_1,\ldots,y_n$ denote the coordinates on $\ga^n\times \gm^n$. Observe that by a simple change of coordinates we may reduce to the case where $W$ is defined by equations of the form $y_k = c_k,~ k=1,\ldots,s$ for some $c_k \in  B$ and some $s\leq n$. 

\begin{claim}\label{claim: one alg element}
    There is $a'\in \Span (\abar, \bbar) \setminus \Span (\bbar)$ such that $a'\in \acl(\bbar) $. 
\end{claim}

We now explain why the claim is enough to prove the induction step. If the claim holds, then we may assume $a'$ is one of the $a_i$, say $a_{t}$.  Then the rest of the $a_i$, denoted $\abar_*$ as a tuple, form a basis of $\hull{\bbar,a'}$ over $\bbar,a'$. Let $B':=B(a', \exp(\Q a'))$. Since by our assumption $e^{a_1},\ldots,e^{a_s}\in B$ and $e^{a_{s+1}},\ldots,e^{a_n}$ are algebraically independent over $B$, we see that the multiplicative projection of $\loc\left(\abar,\exp(\abar))/ B'\right)$ is defined by the equations $y_k=c_k,~ k=1,\ldots,s$, and $y_t=e^{a'}$ and is a coset of a subtorus of $\gm^n$. Therefore, the multiplicative projection of $\loc\left(\abar_*,\exp(\abar_*))/ B'\right)$ is a coset of a subtorus of $\gm^{n-1}$. Also, $\dim (\hull{\bbar,a'} / \bbar,a') = n-1$, hence by the induction hypothesis we have $\hull{\bbar} = \hull{a', \bbar} \seq \acl(a',\bbar)= \acl(\bbar)$.

Now we prove the claim considering two cases.

\subsection*{Case 1.} $\dim W < n - 1$. This means $W$ has at least two constant coordinates, $y_1$ and $y_2$.
Let $U \seq \B^2$ be the projection of $V_{\bbar}^{\dagger}$ to the first two coordinates. Clearly $U \seq (a_1+\tau \Z)\times (a_2+\tau \Z)$. Suppose for some $m_1,m_2,l_1,l_2\in \Z$ we have $(a_1+m_1\tau, a_2+m_2\tau), (a_1+l_1\tau, a_2+l_2\tau) \in U$. We may assume $m_1\geq 0,~ l_1\geq 0$. By Corollary~\ref{cor: locus -> automorphism}, the maps
\[ \mu : (a_1, a_2) \mapsto (a_1+m_1\tau, a_2+m_2\tau),~  \lambda : (a_1, a_2) \mapsto (a_1+l_1\tau, a_2+l_2\tau)\]
extend to automorphisms over $B$. 

Now, the automorphism $\mu^{l_1}\lambda^{-m_1}$ maps $(a_1,a_2)$ to $(a_1, a_2 + (l_1m_2-l_2m_1)\tau)$. If $l_1m_2-l_2m_1 \neq 0$ then the orbit of $a_2$ is infinite under the powers of $\mu^{l_1}\lambda^{-m_1}$ while $a_1$ is fixed. However, by the induction hypotheses, $a_2$ is model-theoretically algebraic over $\bbar a_1$, so this is not possible. Therefore, we deduce that $l_1m_2-l_2m_1 = 0$.

Let $d_i := \gcd(l_i, m_i)$. Then $l_i = d_i l_i',~ m_i = d_i m_i'$ with $\gcd(l_i', m_i')=1$. Substituting this in the above equality we see that 
$ l_1'm_2' = l_2'm_1'$ which then implies that  $l_1' = l_2', m_1' = m_2'$ or $l_1' = -l_2', m_1' = -m_2'$.

Since $\gcd(l_1',m_1') = 1$, there are integers $u, v$ such that $m_1'u+l_1'v = 1$. Then $m_2'u+l_2'v = \pm 1$. Thus, we have 
\[ m_1u + l_1 v = d_1,~ m_2u + l_2v = \pm d_2. \]
Consider the automorphism $\sigma := \mu^u \lambda^v : (a_1,a_2) \mapsto (a_1+d_1\tau, a_2 \pm d_2\tau)$. Then on $(a_1,a_2)$ the automorphism $\mu$ (respectively $\lambda$) agrees with $\sigma^{m_1'}$ (respectively $\sigma^{l_1'}$), regardless of the sign in the second coordinate above. This shows that on $(a_1,a_2)$ all automorphisms over $B$ agree with powers of an automorphism  $\sigma: (a_1,a_2) \mapsto (a_1+d_1\tau, a_2 + d_2\tau)$ where $|d_1|,|d_2|$ are minimal. Since $\sigma^r$ maps $(a_1,a_2)$ to $(a_1+rd_1\tau, a_2 + rd_2\tau)$, we conclude that any point in $U$ must be of the latter form for some integer $r$. Then the formula
\[ \varphi(x):= \exists x_1,\ldots,x_n \left((x_1,\ldots,x_n)\in V_{\bbar}^{\dagger} \wedge x = d_1x_2 - d_2x_1 \right)\]
defines $a':=d_1a_2 - d_2a_1$ over $\bbar$. Since $\abar$ is $\Q$-linearly independent over $\bbar$, if $a'\in \Span(\bbar)$, then $d_1=d_2=0$, and $U$ is a singleton and so $a_1,a_2\in \acl(\bbar)$. Otherwise  $a'\in \Span (\abar, \bbar) \setminus \Span (\bbar)$ and $a'\in \acl(\bbar) $ which proves Claim~\ref{claim: one alg element}.

\subsection*{Case 2.} $\dim W = \dim V = n-1$. In this case $V$ projects dominantly to the coordinates $\ybar_* :=( y_2,\ldots,y_n)$, the coordinate $y_1$ is constant (equal to $c_1$), and every coordinate $x_k$ is algebraic over $\ybar_*$ on $V$.
Let $M \in \GL_{n-1}(\Q)$ and $\bebar_* \in \Span_{\Q}(a_1,\bbar)^{n-1}$ and $\beta_1\in \tau\Z$ be such that $(a_1+\beta_1, M\abar_*+\bebar_*) \in V_{\bbar}^{\dagger}$, where $\abar_* := (a_2,\ldots,a_n)$. Observe that an arbitrary element of $V_{\bbar}^{\dagger}$ is of the form $(a_1+\beta_1, M\abar_*+\bebar_*)$, and we aim to show that $\beta_1$ must necessarily be $0$.

Recall that $B=\Q\left( \Span_{\Q}(\bbar), \exp(\Span_{\Q}(\bbar))\right)$. Write $\cbar_*$ for $\exp(\abar_*)$. Since $\dim W = n-1$, the elements $c_2,\ldots,c_n$ are algebraically independent over $B$. Consider the minimal polynomial of $a_1$ over \[C:=B\left(c_k^{\sfrac{1}{l}}: l\in \Z^+,2\leq k \leq n\right) = B\left(\exp\left(\frac{a_k+r\tau}{l}\right): l\in \Z^+, r\in \Z,2\leq k \leq n\right).\] 
Only finitely many values of $c_k^{\sfrac{1}{l}}$ occur in that polynomial, so by a $\Q$-linear change of variables we may assume it has coefficients in $B(\cbar_*)$. 
Thus, there is an irreducible polynomial $p(X_1, \bar{Y}_*) \in B[X_1, Y_2, \ldots, Y_n]$ such that $p(X_1, \cbar_*)$ is the minimal polynomial of $a_1$ over $C$.

Since the map $(a_1,\abar_*) \mapsto (a_1+\beta_1, M\abar_*+\bebar_*)$ extends to an automorphism $\sigma$ over $B$, we have  $p(a_1+\beta_1, \bar{c}_*^M \cdot \gambar_*) = 0$ where $\gambar_* := e^{\bebar_*} \in \exp \left(\Span_{\Q}(a_1,\bbar)\right)^{n-1} \seq B^{n-1}$ for $e^{a_1} = c_1\in B$. As $M$ is a rational matrix, there is some ambiguity when we compute $\cbar_*^M$, so the last sentence means that for some choice of roots the equality holds. In fact, for any choice of roots there is a $\gambar_*$ for which the equality holds because different roots differ just by a root of unity and these factors can be included in $\gambar_*$. 

Thus, the polynomial $p\left(X_1+\beta_1, \bar{c}_*^M \cdot \gambar_*\right)$ vanishes at $a_1$ and is defined over $C$. Therefore, it must be divisible by the minimal polynomial $p(X_1,\cbar_*)$. Since these two have the same degree, they must agree up to a constant factor, that is, 
\[ p\left(X_1+\beta_1, \bar{c}_*^M \cdot \gambar_*\right) = \frac{f(\cbar_*^{\sfrac{1}{t}})}{g(\cbar_*^{\sfrac{1}{t}})}\cdot p(X_1, \cbar_*)  \]
for some polynomials $f, g \in B[\bar{Y}_*]$ and some positive integer $t$. Since $\cbar_*$ is generic over $\beta_1, \gambar_*$, the following equality holds:

\[g(\bar{Y}_*^{\sfrac{1}{t}}) p\left(X_1+\beta_1, \bar{Y}_*^M \cdot \gambar_*\right) = f(\bar{Y}_*^{\sfrac{1}{t}}) p(X_1, \bar{Y}_*). \]
Here all terms are regarded as Puiseux polynomials, more precisely, Laurent polynomials in $\bar{Y}_*^{\sfrac{1}{m}}$ for a suitable $m$ (any integer that is divisible by all denominators appearing in exponents of $\bar{Y}_*$). The ring of Laurent polynomials is a unique factorisation domain (as it is a localisation of the ring of polynomials), where we may assume that $f$ and $g$ are coprime. We claim that $f(\bar{Y}_*^{\sfrac{1}{t}})$ and $g(\bar{Y}_*^{\sfrac{1}{t}})$ must be units in that ring (monomials). Indeed, $g(\bar{Y}_*^{\sfrac{1}{t}})$ must divide $p(X_1, \bar{Y}_*)$ which implies that $p(X_1, \bar{Y}_*)$ is divisible by a polynomial in $\bar{Y}_*$, namely, any irreducible factor of the product of all Galois conjugates of $g(\bar{Y}_*^{\sfrac{1}{t}})$ over $B(\bar{Y}_*)$. Since $p(X_1, \bar{Y}_*)$ is irreducible, $g(\bar{Y}_*^{\sfrac{1}{t}})$ must be a unit, i.e. a monomial in $\bar{Y}_*^{\sfrac{1}{m}}$. Similarly,  $f(\bar{Y}_*^{\sfrac{1}{t}})$ is a monomial in $\bar{Y}_*^{\sfrac{1}{m}}$.

Thus, we have
\be\label{eq:dimW=n-1_poly_equality}
p\left(X_1+\beta_1, \bar{Y}_*^M \cdot \gambar_*\right) = \xi \cdot \bar{Y}_*^{\bar{u}} \cdot p(X_1, \bar{Y}_*) \mbox{ for some } \xi \in B,~ \bar{u}\in \Q^{n-1}.
\ee

Let $S := \{ \bar{s}\in \Z^{n-1}: \bar{Y}_*^{\bar{s}} \mbox{ occurs in } p(X_1, \bar{Y}_*) \} \seq \Z^{n-1}$ be the support of $p(X_1, \bar{Y}_*)$ in $\bar{Y}_*$ and let $h:= |S|$ (the size of $S$). If $N$ is the transpose of $M$ then equality \eqref{eq:dimW=n-1_poly_equality} implies that $N(S) = S + \bar{u}$ (equality as sets), that is, $N$ permutes $S$ and translates by $\bar{u}$. So there is a permutation $\mu$ on $h$ elements such that 
\[ N\bar{s} = \mu(\bar{s}) + \bar{u} \mbox{ for all } \bar{s}\in S. \]

So for every $r\in \Z$ we have 
\[ N^r \bar{s} = \mu^r(\bar{s}) + (I+\ldots+N^{r-1}) \bar{u} \mbox{ for all } \bar{s}\in S. \]

In particular, for $r=h!$ we have $\mu^{h!} = \id$ hence
\[ N^{h!} \bar{s} = \bar{s} + \bar{v} \mbox{ for all } \bar{s}\in S, \mbox{ where } \bar{v} = (I+\ldots+N^{h!-1}) \bar{u}. \]

Now, from \eqref{eq:dimW=n-1_poly_equality} we deduce that for all $r$
\[ p\left(X_1+r\beta_1, \bar{Y}_*^{N^r} \cdot \gambar_*^{I+\ldots+N^{r-1}}\right) = \xi^r \cdot \bar{Y}_*^{(I+\ldots+N^{r-1})\bar{u}} \cdot p(X_1, \bar{Y}_*). \] 
In particular,
\[ p\left(X_1+h!\beta_1, \bar{Y}_*^{N^{h!}} \cdot \bar{\delta} \right) = \xi^{h!} \cdot \bar{Y}_*^{\bar{v}} \cdot p(X_1, \bar{Y}_*) \mbox{ where } \bar{\delta}:=\gambar_*^{I+\ldots+N^{h!-1}}. \]

Now pick $\bar{s}\in S$ such that the coefficient of $\bar{Y}_*^{\bar{s}}$ in $p(X_1, \bar{Y}^*)$ is a non-constant polynomial $q(X_1)\in B[X_1]$. Comparing the coefficients of $\bar{Y}_*^{\bar{s}+\bar{v}}$ in both sides in the above equality we get

\[ q(X_1+h!\beta_1) \cdot \bar{\delta}^{\bar{s}} = \xi^{h!} \cdot q(X_1). \]

The comparison of the leading coefficients yields $\bar{\delta}^{\bar{s}} = \xi^{h!}$. Therefore, $q(X_1+h!\beta_1) =q(X_1)$. Since $q$ is non-constant and therefore cannot be periodic, we conclude that $\beta_1=0$.

Thus,  we showed that for any element in $V_{\bbar}^{\dagger}$ the first coordinate is $a_1$, hence $a_1 \in \acl(\bbar)$, and the claim follows. This finishes the proof.
\end{proof}

We now show that in the above proposition the condition on $W$ can be dropped by proving that the general case can always be reduced to that situation. To that end we will need the Mordell-Lang conjecture for algebraic tori:

\begin{fact}[Mordell-Lang for algebraic tori, {\cite{laurent}}]

Let $K$ be an algebraically closed field of characteristic $0$. Let also $W \subs \gm^n(K)$ be an irreducible proper algebraic subvariety, and $\Gamma \subs \gm^n(K)$ a subgroup of finite rank. 

Then there is a finite number $N \in \N$, there are $\bar{\gamma}_1,\ldots,\bar{\gamma}_N \in \Gamma$, and algebraic subgroups $H_1,\ldots,H_N$ of $\gm^n$, such that for each $i$ we have $\bar{\gamma}_i H_i \subseteq W$ and 
\[W\cap \Gamma = \bigcup_{i=1}^N \bar{\gamma}_i  H_i \cap \Gamma.\]
\end{fact}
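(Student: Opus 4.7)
This is Laurent's theorem, whose classical proof reduces Mordell--Lang for $\gm^n$ to the theorem on vanishing sums of $S$-units (due to Evertse, Schlickewei, Schmidt, and van der Poorten). My plan would be to follow that route: first reduce to a finitely generated $\Gamma$ over a finitely generated subfield of $K$, then interpret the condition $\bar{\gamma}\in W$ as a system of $S$-unit equations, and finally read off the coset structure from the dichotomy between degenerate and non-degenerate solutions.

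For the reduction, observe that a finite-rank group $\Gamma$ lies in the division hull of some finitely generated subgroup $\Gamma_0 \subs \gm^n(K)$. Replacing $K$ with the field generated over the prime field by $\Gamma_0$, the coefficients of a set of defining equations for $W$, and a finite extension to accommodate division points, makes $K$ finitely generated; then $\Gamma$ is contained in the group of $S$-units for some finite set of places $S$. Writing $W$ as the common zero locus of Laurent polynomials $p_j(\bar{y}) = \sum_{\bar{m}\in T_j} a_{j,\bar{m}}\bar{y}^{\bar{m}}$, the condition $\bar{\gamma}\in W$ becomes the vanishing of each sum $\sum_{\bar{m}\in T_j} a_{j,\bar{m}}\bar{\gamma}^{\bar{m}}$ whose individual terms are $S$-units.

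I would then stratify $W\cap\Gamma$ according to which subsums of each $p_j(\bar{\gamma})$ vanish. On the non-degenerate stratum, where no proper subsum vanishes, the theorem on $S$-unit equations supplies only finitely many solutions up to proportionality, which contribute finitely many isolated points, i.e.\ cosets $\bar{\gamma}_i\cdot\{1\}$. On any degenerate stratum, the forced vanishing of a subsum $\sum_{\bar{m}\in T'} a_{j,\bar{m}}\bar{y}^{\bar{m}} = 0$ is equivalent, after dividing by a single monomial, to relations of the form $\bar{y}^{\bar{m}-\bar{m}'} = \mathrm{const}$, which cut out cosets of algebraic subtori. Recursing on the intersections of $W$ with these cosets (which have strictly smaller dimension) yields the required finite decomposition.

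The main obstacle is the passage from finitely many non-degenerate solutions plus finitely many enforced vanishing patterns to a decomposition whose coset pieces are \emph{contained} in $W$, not merely meeting it. This uses the geometric fact that an irreducible subvariety of $\gm^n$ containing a Zariski-dense subset of a translate of a subtorus must contain the whole translate, combined with the induction on dimension and irreducibility of $W$ to keep the collection of produced cosets finite. A secondary technical point is handling the division hull: one must check that moving from $\Gamma_0$ back up to $\Gamma$ does not destroy the coset description, which follows because the intersection of a coset $\bar{\gamma}_iH_i$ with the division hull of $\Gamma_0$ is again a coset intersected with $\Gamma$.
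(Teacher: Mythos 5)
The paper does not prove this statement at all: it is imported as a black box, cited to Laurent, and used in Proposition~\ref{prop: acl contains hull}. So the only thing to assess is your sketch on its own terms. Its overall architecture is the standard one for Laurent's theorem (translate membership in $W$ into vanishing sums of units, stratify by vanishing subsums, read off cosets), and the ``main obstacle'' you flag is in fact handled by a standard observation: the coset through $\bar\gamma$ cut out by the conditions $\bar y^{\bar m-\bar m'}=\bar\gamma^{\bar m-\bar m'}$, for $\bar m,\bar m'$ in the same minimal vanishing subsum, is \emph{automatically} contained in $W$, because each minimal vanishing subsum is, on that coset, a fixed monomial times a vanishing constant; so no recursion is needed for containment, only for finiteness bookkeeping. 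A smaller inaccuracy: finitely many non-degenerate solutions of a unit equation are finite only \emph{up to proportionality}, so the non-degenerate stratum contributes finitely many cosets of the stabiliser subtorus of the support, not isolated points; this is harmless but should be stated correctly.

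The genuine gap is in your first reduction. A finite-rank subgroup $\Gamma$ of $\gm^n(K)$ is \emph{not} contained in the $S$-units of a finitely generated field, and ``a finite extension to accommodate division points'' is false: already the rank-one group generated by all elements $2^{1/m}$, $m\geq 1$, is not contained in any number field, let alone an $S$-unit group. Consequently the 1984-style theorem on vanishing sums of $S$-units does not apply directly to $\Gamma$, and your closing remark about ``moving from $\Gamma_0$ back up to $\Gamma$'' does not repair this, since knowing the coset structure of $W\cap\Gamma_0$ gives no information about points of $\Gamma\setminus\Gamma_0$. Passing from finitely generated groups to their division hulls (equivalently, to arbitrary finite-rank groups) is precisely the hard extra content of Laurent's theorem: one must either invoke the Evertse--Schlickewei--Schmidt theorem on linear equations with unknowns in a finite-rank multiplicative group (whose proof requires the absolute/quantitative subspace theorem, not just the classical $S$-unit theorem), or carry out a separate Kummer-theoretic descent from the division group to a finitely generated group as Laurent does. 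As written, your proof establishes at best the finitely generated case.
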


\begin{proposition}\label{prop: acl contains hull}
Let $D \subseteq \B$ be any subset. Then $\hull{D}^{\EA}\seq \acl(D)$.
\end{proposition}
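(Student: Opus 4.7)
First I would carry out two standard reductions. By the finite character of both $\acl$ and $\hull{\cdot}$, it suffices to prove $\hull{\bbar}^{\EA} \subs \acl(\bbar)$ for a finite tuple $\bbar$. The set $\acl(\bbar)$ is automatically closed under the field operations and under $\exp$ (both being $\emptyset$-definable functions), and is relatively field-algebraically closed in $\B$; hence it is enough to establish $\hull{\bbar} \subs \acl(\bbar)$. WLOG $\tau \in \bbar$ and $\bbar$ is $\Q$-linearly independent. Pick a $\Q$-basis $\abar = (a_1,\ldots,a_n)$ of $\hull{\bbar}$ over $\bbar$; after rescaling $\abar, \bbar$ to $\abar/m, \bbar/m$ (which preserves both $\hull{\cdot}$ and $\acl$) we may assume $\abar\bbar$ is a good basis, so by Proposition~\ref{prop: isolating types over any sets} the type $\tp(\abar/\bbar)$ is isolated by $\xbar \in V^\dagger_\bbar$ where $V := \loc(\abar, \exp(\abar)/B)$.

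I would then induct on $n = \ldim_\Q(\hull{\bbar}/\bbar)$; the base $n = 0$ is immediate. For the inductive step let $W := \pr_{\gm^n}V$. If $W$ is a coset of a subtorus of $\gm^n$, Proposition~\ref{prop: fin many pairs (M,beta) assuming constant coord} directly gives $\abar \subs \acl(\bbar)$, so henceforth assume $W$ is not a coset. Note $W$ is irreducible (as the image of the irreducible locus $V$ under the projection) and a proper subvariety of $\gm^n$ (since $\dim W \leq \dim V \leq n-1$). I would apply the Mordell--Lang theorem to $W$ and the finite-rank subgroup $\Gamma := \exp(\hull{\bbar}^n) \leq \gm^n(\B)$ (finite-rank because Schanuel forces $\hull{\bbar}$ to be finite-dimensional over $\Q$): this yields cosets $\bar\gamma_i H_i \subs W$ with $W \cap \Gamma = \bigcup_i \bar\gamma_i H_i \cap \Gamma$. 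Since $\exp(\abar) \in W \cap \Gamma$, it lies in some $\bar\gamma_{i_0} H_{i_0}$, and because $W$ is irreducible but not a coset we get $\dim H_{i_0} < \dim W \leq n-1$, so $H_{i_0}$ has codimension at least two in $\gm^n$.

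The critical step is passing from this Mordell--Lang coset to an element of $\acl(\bbar)$. I would choose the decomposition canonically, e.g.\ as the family of all maximal cosets of subtori that are contained in $W$ and have representatives in $\Gamma$; this family is $\Aut(\B/\bbar)$-invariant because $W$ is $\bbar$-definable and $\Gamma = \exp(\hull{\bbar}^n)$ is setwise fixed by $\Aut(\B/\bbar)$ (as $\hull{\bbar}$ is). Hence each coset has a finite $\Aut(\B/\bbar)$-orbit, and for every character $\chi_{\bar k}$ of $\gm^n$ vanishing on $H_{i_0}$ the value $c := \chi_{\bar k}(\bar\gamma_{i_0}) = \exp(\bar k \cdot \abar)$ has finite orbit, hence lies in $\acl(\bbar)$. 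Codimension at least two supplies two $\Q$-linearly independent characters $\bar k_1, \bar k_2$ and therefore two constants $c_1, c_2 \in \acl(\bbar)$.

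Finally, a $\Q$-linear change of basis makes $\tilde a_j := \bar k_j \cdot \abar$ the first two coordinates of a new good basis (rescaling once more if needed), so the two corresponding coordinates $e^{\tilde a_1} = c_1,\ e^{\tilde a_2} = c_2$ of the multiplicative projection are now constants lying in $\acl(\bbar)$. This puts us in exactly the setting of Case~1 in the proof of Proposition~\ref{prop: fin many pairs (M,beta) assuming constant coord}, except that the constants are in $\acl(\bbar)$ rather than in $B$. I would run Case~1's lattice/automorphism argument over the enlarged base $\bbar \cup \{c_1, c_2\}$, invoking the analogue of Corollary~\ref{cor: locus -> automorphism} for the finite tuple $\bbar \cup \{c_1, c_2\}$ to produce the required automorphisms; this yields either a nontrivial integer combination $a' := d_1 \tilde a_2 - d_2 \tilde a_1 \in \hull{\bbar} \setminus \Span_\Q(\bbar)$ lying in $\acl(\bbar \cup \{c_1, c_2\}) = \acl(\bbar)$, or else places $\tilde a_1, \tilde a_2$ directly into $\acl(\bbar)$. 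Setting $\bbar^* := \bbar \cup \{a'\}$ preserves both $\hull{\cdot}$ and $\acl$ while strictly decreasing $n$, so the induction hypothesis completes the argument. The chief obstacles are (a) making the canonical choice of Mordell--Lang decomposition genuinely $\Aut(\B/\bbar)$-equivariant so that the finite-orbit conclusion on $c_1, c_2$ goes through, and (b) transplanting Case~1's lattice argument from $B$-constants to $\acl(\bbar)$-constants by invoking the $\aleph_0$-homogeneity of $\B$ over the finite tuple $\bbar \cup \{c_1, c_2\}$.
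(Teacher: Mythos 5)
Your overall strategy (Mordell--Lang applied to $W=\pr_{\gm^n}V$ and the finite-rank group $\Gamma=\exp(\hull{\bbar})^n$, followed by a reduction to the coset situation of Proposition~\ref{prop: fin many pairs (M,beta) assuming constant coord}) is the right one, but the step you yourself flag as obstacle (a) is a genuine gap, and it is exactly the crux of the proposition. You want to conclude that the character values $c_j=\exp(\bar k_j\cdot\abar)$ lie in $\acl(\bbar)$ because they have finite orbit under $\Aut(\B/\bbar)$. Two problems. First, finiteness of the orbit is not established: Mordell--Lang gives finitely many cosets \emph{covering} $W\cap\Gamma$, but your ``canonical family of all maximal cosets of subtori contained in $W$ with a representative in $\Gamma$'' is not obviously finite --- a maximal coset through a point of $\Gamma$ need not be one of the Mordell--Lang cosets, and ruling out infinitely many of them requires extra work (e.g.\ a further application of Mordell--Lang in quotient tori together with finiteness of the possible maximal subtori). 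Second, even granting a finite orbit, $\B$ is not saturated, so a finite automorphism orbit does not by itself give membership in $\acl(\bbar)$; you would additionally have to argue that the relevant type is isolated (pushing forward Proposition~\ref{prop: isolating types over any sets} along a definable map), which you do not do. The paper closes this gap by a genuine \emph{definability} argument that your proposal never invokes: the isolating formula $\xbar\in V^\dagger_\bbar$ together with the ``furthermore'' clause of Proposition~\ref{prop: isolating types over any sets} makes $\Gamma$ itself $\bbar$-definable, and then one writes down a $\bbar$-definable subset $\Gamma_1\subseteq\Gamma$ contained in the finitely many Mordell--Lang cosets, so that the (finite) set of values of each character on $\Gamma_1$ is a $\bbar$-definable finite set; this is what actually puts the constants into $\acl(\bbar)$.

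The endgame also diverges from the paper in a way that creates avoidable difficulties. The paper first replaces $\bbar$ by a tuple from $\acl(\bbar)$ over which $\loc(\abar,\exp(\abar)/\acl(\bbar))$ is defined; then the Mordell--Lang coset $\kappa_j$ containing $\exp(\abar)$ is defined over $\acl(\bbar)$, so $W=\loc(\exp(\abar)/\acl(\bbar))\subseteq\kappa_j\subseteq W$, i.e.\ $W$ \emph{is} a coset, and Proposition~\ref{prop: fin many pairs (M,beta) assuming constant coord} applies verbatim --- no separate ``not a coset'' case is needed. Your alternative of transplanting Case~1 of that proof to the base $\bbar\cup\{c_1,c_2\}$ is delicate and left unargued: the isolation machinery (Proposition~\ref{prop: isolating types over any sets}, Corollary~\ref{cor: locus -> automorphism}) is set up for loci over fields of the form $\Q\left(\Span_{\Q}(\bbar),\exp(\Span_{\Q}(\bbar))\right)$, and adjoining $c_1,c_2$ (exponentials of elements of $\hull{\bbar}$ not in $\Span_\Q(\bbar)$) changes the hull and the good-basis bookkeeping, so one cannot simply ``run Case~1 over the enlarged base'' without re-establishing that setup. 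So the proposal needs both the definability input for the constants and a cleaner reduction (for instance, the paper's base-change into $\acl(\bbar)$) before it becomes a proof.
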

 
\begin{proof}
By finite character and the definition of the EA-closure it suffices to show that if $\bbar \seq \B$ is any finite tuple containing $\tau$ and $\abar$ is a good basis of the strong hull $\hull{\bbar}$ over $\bbar$ then $\abar \seq \acl(\bbar)$.  
    
Suppose for a contradiction there is a tuple $\bbar$ for which this is not true, and let $V \leteq \loc\left(\abar,\exp(\abar)/ B\right)$ where, as above, $B:=\Q\left( \Span_{\Q}(\bbar), \exp(\Span_{\Q}(\bbar))\right)$. Let $\bbar^*$ be a tuple from $\acl(\bbar)$ extending $\bbar$ over which $\loc(\abar,\exp(\abar)/ \acl(\bbar))$ is defined. Then $\hull{\bbar^*}\nsubseteq \acl(\bbar) = \acl(\bbar^*)$. So we replace $\bbar$ with $\bbar^*$ and assume $V=\loc(\abar,\exp(\abar)/ \acl(\bbar))$.

Let $\Gamma_0 := \exp(\hull{\bbar}) \le \gm(\B)$. Since $\hull{\bbar}$ is a finite-dimensional $\Q$-vector space, $\Gamma_0$ is a finite rank subgroup of $\gm(\B)$. It is also definable over $\bbar$, for the formula $\xbar \in V_{\bbar}^\dagger$ isolates $\tp(\abar/\bbar)$. More precisely, $\Gamma_0$ is defined by the formula \[ \left( \exists \xbar\in V_{\bbar}^{\dagger} \right) \left(\exists \bar{m}\in \Z^{|\abar|}\right) \left( \exists \bar{k}\in \Z^{|\bbar|}\right) \left[ y = \exp \left(\sum_i m_ix_i + \sum_j k_jb_j \right)\right].\] Hence $\Gamma := \Gamma_0^n \le \gm^n$, where $n:=|\abar|$, is also definable over $\bbar$. 

Let $W:= \pr_{\gm^n}V$. Clearly, we have \[ \exp(\abar) \in {\exp(V^\dagger_{\bbar})} \subs W \cap \Gamma.\] By Mordell-Lang, there are a positive integer $N$ and $\kappa_1, \ldots, \kappa_N \seq W$ such that
\begin{equation}\label{eq:ML-W-Lambda}
    W \cap \Gamma = \bigcup_{j=1}^N (\kappa_j \cap \Gamma),
\end{equation} where each $\kappa_j$ is a coset of a connected algebraic subgroup $T_j$ of $\gm^n$. We may assume $\kappa_j$'s are maximal with respect to inclusion. This also means that each set $\kappa_j\cap \Gamma$ is maximal, for it is Zariski dense in $\kappa_j$.

\begin{claim}\label{claim}
    Each $\kappa_j$ is {field-theoretically} definable over $\acl(\bbar)$.
\end{claim}
\begin{claimproof}
{
We will prove that $\kappa_1$ is definable over $\acl(\bbar)$. Suppose $\kappa_1,\ldots,\kappa_M$ are all the cosets of $T_1$ among $\kappa_1, \ldots, \kappa_N$, i.e. $T_1=\ldots=T_M\neq T_j$ for all $j>M$.    Consider the set 
   \[  \Gamma_1:=\left\{ \gambar \in \Gamma: \gambar T_1 \seq W \wedge \bigwedge_{M<j\leq N} \gambar T_j \nsubseteq W \right\}.  \]
    We claim that $\Gamma_1 \subseteq \bigcup_{j\leq M} \kappa_j $. Indeed, if there exists $\gambar \in \Gamma_1 \setminus \bigcup_{j\leq M} \kappa_j $ then $\gambar T_1 \seq W$ hence $\gambar T_1\seq \kappa_j$ for some $j>M$, which then implies $\gambar T_j \seq W$, a contradiction. }
    
    {Since the $\kappa_j$ are maximal, $\Gamma_1$ contains at least one point from each $\kappa_j$ for $j\leq M$.}

   {The set $\Gamma$ is definable over $\bbar$, hence so is $\Gamma_1$. Now if $T_1$ satisfies an equation of the form $y_1^{m_1}\cdots y_n^{m_n}=1$, then $y_1^{m_1}\cdots y_n^{m_n}$ is constant on  $\kappa_j$ for all $j\leq M$. The finite set of all these constants is definable over $\bbar$ as the set of the values of the definable function $y_1^{m_1}\cdots y_n^{m_n}$ on $\Gamma_1$. Thus, each of these constants is in $\acl(\bbar)$. This means that $\kappa_1$ is field-theoretically definable over $\acl(\bbar)$.}

   A similar argument shows that every $\kappa_j$ is definable over $\acl(\bbar)$. That concludes the proof of Claim~\ref{claim}.
\end{claimproof}

To conclude the proof of Proposition~\ref{prop: acl contains hull} we observe that by \eqref{eq:ML-W-Lambda}, $\exp(\abar)\in \kappa_j$ for some $j$. But we also know that $W = \loc(\exp(\abar)/\acl(\bbar))$, and so $W \subseteq \kappa_j$. On the other hand, $\kappa_j\seq W$ hence $W = \kappa_j$.

We can now apply Proposition \ref{prop: fin many pairs (M,beta) assuming constant coord} and deduce that $\hull{\bbar} \seq \acl(\bbar)$.
\end{proof}

\section{Upper bound for the algebraic closure}\label{sec: acl subset hull}

Recall that an ELA-subfield of $\B$ is an exponential subfield on which $\exp$ is surjective (i.e. every element has a logarithm) and which is field-theoretically algebraically closed.

\begin{proposition}\label{prop: strong ELA B=acl(B)}
    If $B \strong \B$ and $B$ is an ELA-subfield then $B = \acl_\B(B)$.
\end{proposition}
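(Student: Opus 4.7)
The inclusion $B \subseteq \acl_\B(B)$ is immediate, so the task is to show $\acl_\B(B) \subseteq B$. Suppose for contradiction that $a \in \acl_\B(B) \setminus B$. Pick a finite tuple $\bbar_0 \subseteq B$ with $a \in \acl_\B(\bbar_0)$. Since $B \strong \B$, the hull $\hull{\bbar_0}_\B$ is a finite-dimensional $\Q$-subspace contained in $B$. Replace $\bbar_0$ by a $\Q$-linear basis of $\hull{\bbar_0}_\B$ containing $\tau$, calling the result $\bbar$. Then $\bbar \strong \B$ and $\Span_\Q(\bbar) \subseteq B$, so the hypothesis $a \notin B$ forces $a$ to be $\Q$-linearly independent over $\bbar$.

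Strongness of $\bbar$ gives $\delta(a/\bbar) \geq 0$, so $\delta(a/\bbar) \in \{0,1\}$. I first rule out $\delta(a/\bbar) = 1$. In that case $a$ would be $\ecl$-transcendental over $\bbar$; by the Countable Closure Property, $\ecl_\B(\bbar)$ is countable, and by the \Fraisse-style homogeneity behind the construction of $\B$ (isomorphic finitely generated strong $\Gamma$-field extensions of $\bbar$ are conjugate under $\Aut(\B/\bbar)$, so all $\ecl$-transcendental elements over $\bbar$ realise the same type) there would be uncountably many realisations of $\tp(a/\bbar)$, contradicting $a \in \acl_\B(\bbar)$. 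So $\delta(a/\bbar) = 0$, and Proposition~\ref{prop: isolating types over strong sets} applies: after replacing $a$ by $a/m$ for a suitable positive integer $m$ (which preserves whether $a \in B$, since $B$ is a $\Q$-vector subspace), $\tp(a/\bbar)$ is isolated by $x \in V^\dagger_\bbar$, where $V = \loc(a,e^a/B^*)$ and $B^* := \Q\bigl(\Span_\Q(\bbar), \exp(\Span_\Q(\bbar))\bigr) \subseteq B$.

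The heart of the proof is then to verify that $V \subseteq \ga \times \gm$ is free and rotund of dimension $1$, and to derive a contradiction via iterated Strong Exponential-Algebraic Closedness. Since $a \notin B$ and $B$ is algebraically closed as a field, $a$ is transcendental over $B^*$ (otherwise $a \in \overline{B^*} \subseteq B$). If $e^a$ were algebraic over $B^*$ then $e^a \in \overline{B^*} \subseteq B$, and the ELA property of $B$ would give $b \in B$ with $e^b = e^a$; hence $a - b \in \ker(\exp) = \tau\Z \subseteq B$ and so $a \in B$---a contradiction. Thus both projections of $V$ are dominant, which makes $V$ free, and rotundity is automatic for an irreducible $1$-dimensional curve. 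Setting $a_1 := a$ and inductively applying SEAC of $\B$ to the free rotund $1$-dimensional variety $V$ with parameter tuple $(\bbar, a_1, \ldots, a_k)$, I obtain $a_{k+1} \in V^\dagger_{(\bbar, a_1, \ldots, a_k)} \subseteq V^\dagger_\bbar$ which is $\Q$-linearly independent over $(\bbar, a_1, \ldots, a_k)$ and hence distinct from the previous $a_i$. This yields infinitely many realisations of the isolated type $\tp(a/\bbar)$, contradicting $a \in \acl_\B(\bbar)$. The step I expect to be hardest to pin down rigorously is the $\delta(a/\bbar)=1$ case: justifying that all $\ecl$-transcendentals over $\bbar$ share one type relies on \Fraisse-homogeneity features of $\B$ that are implicit in its construction but do not follow directly from Proposition~\ref{prop: isolating types over strong sets}.
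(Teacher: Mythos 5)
Your reduction to a finite tuple $\bbar$ spanning $\hull{\bbar_0}$, and your treatment of the case $\delta(a/\bbar)=0$ (freeness of the curve via the ELA property of $B$, rotundity, isolation by Proposition~\ref{prop: isolating types over strong sets}, and iterated Strong Exponential-Algebraic Closedness producing infinitely many realisations of the isolated type) are sound, and that branch is essentially the paper's own argument specialised to a one-dimensional locus. The gap is in the other branch: the inference ``$\delta(a/\bbar)=1$ implies $a$ is $\ecl$-transcendental over $\bbar$'' is false. The quantity $\delta(a/\bbar)$ only sees the single pair $(a,e^a)$, whereas exponential algebraicity may be witnessed only through auxiliary elements; that is, $\hull{\bbar a}$ can be strictly larger than $\Span_\Q(\bbar,a)$ with $\delta(\hull{\bbar a}/\bbar)=0$ even though $\delta(a/\bbar)=1$. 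A concrete instance is the paper's own $\log\log 2$ example: an element $a$ with $\exp(\exp(a))=2$ has $\td(a,e^a/\Q(\tau))=2$ by the Schanuel Property, so $\delta(a/\tau)=1$, yet $a$ is exponentially algebraic over $\emptyset$. For such an $a$ (which your proof must still exclude from $\acl(\bbar)$ --- you cannot appeal to the theorem being proved) neither of your branches applies: Proposition~\ref{prop: isolating types over strong sets} needs $\delta=0$ for the tuple you feed it, and your CCP/homogeneity argument needs genuine $\ecl$-transcendence, which is exactly what fails here.

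The paper closes this case by splitting on exponential transcendence versus exponential algebraicity over $B$ rather than on the value of $\delta(a/\bbar)$. The exponentially transcendental case is handled by the non-isolation of the generic type (\cite[Proposition~3.2]{Kirby-pseudo}), in the same spirit as your uncountability argument. In the exponentially algebraic case one takes a full good basis $a_1=a,a_2,\ldots,a_n$ of $\hull{Ba}$ over $B$, of length $n$ possibly greater than $1$, so that $\delta(\abar/B)=0$, and runs your SEAC argument with the $n$-dimensional variety $V=\Loc(\abar,e^{\abar}/B)$, which is free because $B$ is an ELA-subfield, rotund because $B\strong\B$, and of dimension $n$; the realisations produced by SEAC over growing parameter sets have pairwise distinct first coordinates, so $\tp(a/\bbar)$ is non-algebraic. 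Your argument as written covers only those $a$ with $\hull{Ba}=\Span_\Q(B,a)$; to repair it you need to pass to this full basis of the hull over $B$ instead of working with the single element $a$.
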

\begin{proof}
    Let $a_1\in \B \setminus B$. We will show that $a_1\notin \acl(B)$. By \cite[Proposition 3.2]{Kirby-pseudo} the (unique) type of an element which is exponentially transcendental over $B$ is not isolated, so in in particular not algebraic. Hence we may assume that $a_1$ is exponentially algebraic over $B$.

    Let $a_1,\ldots,a_n$ be a good basis for $\hull{Ba_1}$ over $B$ (replacing $a_1$ by some $\frac{a_1}{m}$ for an integer $m$ if necessary) and let $\abar:=(a_1,\ldots,a_n)$. Define $V:= \Loc(\abar, e^\abar/B)$. Since $B$ is an ELA-subfield, $V$ is free. Furthermore, $V$ is rotund, for $B \strong \B$. Since $a_1$ is exponentially algebraic over $B$ and $\abar$ is a basis for $\hull{Ba_1}$ over $B$, we have $\delta(\abar/B)=0$, and so $\dim V = n$.

    It is enough to show that $a_1\notin \acl(\bbar)$ for any finite tuple $\bbar$ from $B$. Extending $\bbar$, we may assume $V$ is defined over $\bbar$ and $\bbar \strong B$, so also $\bbar \strong \B$. 

    By Proposition~\ref{prop: isolating types over strong sets}, $\tp(\abar/\bbar)$ is isolated by the formula $\xbar \in V^{\dagger}_{\bbar}$. Suppose we have $N$ distinct realisations of this type, say $\cbar_1, \ldots, \cbar_N$. Then by Strong Exponential-Algebraic Closedness there is  $\cbar_{N+1}\in V^{\dagger}_{\bbar, \cbar_1, \ldots, \cbar_N} $.

    So inductively  we can find infinitely many realisations of $\tp(\abar/\bbar)$, with all components of the tuples $\Q$-linearly independent, so in particular distinct. Hence, $\tp(a_1/\bbar)$ is not algebraic, and so $a_1\notin \acl(B)$.
\end{proof}

{In order to deal with the case when $B$ is not an ELA-subfield of $\B$, we need to establish several auxiliary facts.}

\iffalse
\begin{proposition}\label{prop: EA acl=hull}
    If $B \strong \B$ and $B$ is an EA-subfield containing $\tau$ then $B = \acl(B)$.
\end{proposition}
\begin{proof}
    Pick an arbitrary $a\in \B \setminus B$. We will show that $a \notin \acl(B)$. By Proposition~\ref{prop: strong ELA B=acl(B)}, it suffices to consider $a \in \gen{B}^{\ELA}\setminus B$. \textcolor{brown}{Then (from the proof of \cite[Theorem~2.18]{Kirby-FPEF}, or just directly), there is a good basis $a_1,\ldots,a_n$ of $\hull{B a}$ over $B$ such that for each $i=1,\ldots,n$, exactly one of $a_i$ and $e^{a_i}$ is transcendental over $B,a_1,e^{a_1},\ldots,a_{i-1},e^{a_{i-1}}$.} \VA{I don't know why this is true.}

    Since $B$ is an EA-subfield, we must have $e^{a_1}\in B$ and $a_1$ transcendental over $B$. So, by Lemma~\ref{lem: e^a in span, a notin span}, $a_1\notin \acl(B)$.

    However, by Proposition~\ref{prop: acl contains hull}, $a_1\in \acl(Ba)$. If $a\in \acl(B)$ then $\acl(Ba)=\acl(B)$, which gives a contradiction. So $a\notin \acl(B)$ as required.
\end{proof}
\fi

{Let $X \subseteq \B$ be a $\Q$-linear subspace and $a \in \gen{X}^{\ELA}$. Then it follows from the construction of the ELA-closure (see \cite{Kirby-FPEF}) that there is $n \in \N$ and a sequence $a_1,\ldots,a_n \in \B$ such that for each $i=1,\ldots,n$, at least one of $a_i$ and $e^{a_i}$ is algebraic over $\Q(X,\exp(X),a_1,e^{a_1},\ldots,a_{i-1},e^{a_{i-1}})$, and $a \in \Span_\Q(X,a_1,\ldots,a_n)$. In fact without loss of generality, $a_n=a$.}

If we take $n$ to be least possible for $X$ and $a$ then it follows that $a_1,\ldots,a_n$ are $\Q$-linearly independent over $X$ and (unless $a \in X$) that $n\ge 1$. In this case we say that $a_1,\ldots,a_n$ is a witnessing sequence that $a\in  \gen{X}^{\ELA}$. If $X \strong \B$ then in addition we have that for each $i=1,\ldots,n$, exactly one of $a_i$ and $e^{a_i}$ is algebraic over $\Q(X,\exp(X),a_1,e^{a_1},\ldots,a_{i-1},e^{a_{i-1}})$. It also follows that $\Span_{\Q}(X,a_1,\ldots,a_n)\lhd \B$ (by induction on $n$).

\begin{lemma}\label{hull in ELA-closure}
{Suppose $X \strong \B$ and that $a_1,\ldots,a_n$ is a witnessing sequence that $a \in \gen{X}^{\ELA}$. Then $a_1,\ldots,a_n$ is also a basis for $\hull{Xa}$ over $X$.}
\end{lemma}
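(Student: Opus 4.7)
The plan is to prove $\hull{Xa} = Y$ where $Y := \Span_\Q(X, a_1, \ldots, a_n)$. The conclusion then follows immediately from the given $\Q$-linear independence of the $a_i$ over $X$ together with $a \in Y$ (since without loss of generality $a = a_n$).

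I would first show $Y \strong \B$ with $\delta(Y/X) = 0$, so that $\hull{Xa} \subseteq Y$ by the defining minimality of the hull. Setting $Y_i := \Span_\Q(X, a_1, \ldots, a_i)$, the $\Q$-linear independence of the $a_i$ over $X$ gives $\ldim_\Q(Y_i/Y_{i-1}) = 1$, while the witnessing algebraicity bounds $\td(a_i, e^{a_i}/Y_{i-1}, \exp(Y_{i-1})) \leq 1$; hence $\delta(Y_i/Y_{i-1}) \leq 0$. Summing in the tower and using $X \strong \B$ for the lower bound $\delta(Y/X) \geq 0$, each summand must vanish, so $\delta(Y/X) = 0$. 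For strongness, any finite-dimensional $L \supseteq Y$ in $\B$ satisfies $\delta(L/Y) = \delta(L/X) - \delta(Y/X) = \delta(L/X) \geq 0$ by additivity, and $\ker \subseteq X \subseteq Y$.

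For the reverse inclusion $Y \subseteq \hull{Xa}$, set $H := \hull{Xa}$ and $d := \ldim_\Q(H/X) \leq n$; I would derive a contradiction from $d < n$ via the minimality of $n$. Additivity together with $X, H \strong \B$ gives $0 = \delta(Y/X) = \delta(H/X) + \delta(Y/H)$ with both summands $\geq 0$, so $\delta(H/X) = 0$. I would then inductively build a witnessing sequence $c_1, \ldots, c_d$ for $a$ inside $H$: at stage $k$ with $X_k := \Span_\Q(X, c_1, \ldots, c_k) \strong \B$ and $\delta(H/X_k) = 0$, pick $c_{k+1} \in H \setminus X_k$ with $\delta(X_{k+1}/X_k) = 0$, equivalently $\td(c_{k+1}, e^{c_{k+1}}/X_k, \exp(X_k)) = 1$. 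Additivity then preserves $X_{k+1} \strong \B$ and $\delta(H/X_{k+1}) = 0$, so the process continues until $X_d = H \ni a$. By construction, at each stage one of $c_{k+1}, e^{c_{k+1}}$ is algebraic over $\Q(X, \exp(X), c_1, e^{c_1}, \ldots, c_k, e^{c_k})$, so $c_1, \ldots, c_d$ is a witnessing sequence for $a \in \gen{X}^{\ELA}$ of length $d < n$, contradicting the minimality of $n$. Hence $d = n$ and $H = Y$.

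The main obstacle is the existence of a valid $c_{k+1}$ at each stage---equivalently, ruling out that every $c \in H \setminus X_k$ has $\td(c, e^c/X_k, \exp(X_k)) = 2$. This is a matroid-style exchange argument: the $2(d-k)$ generators $(c_i, e^{c_i})$ from any $\Q$-basis of $H/X_k$ satisfy exactly $d-k$ algebraic relations over $(X_k, \exp(X_k))$ (since $\delta(H/X_k) = 0$), which forces some $\Q$-linear combination to project to an algebraic locus of dimension $\leq 1$, yielding the required $c_{k+1}$. This is closely related to the good-basis construction from \cite[Proposition~3.22]{Bays-Kirby-exp} invoked in Propositions~\ref{prop: isolating types over strong sets} and~\ref{prop: isolating types over any sets}.
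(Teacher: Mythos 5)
Your first half is fine and reproduces what the paper establishes just before the lemma: the tower computation gives $\delta(Y/X)=0$ and $Y:=\Span_\Q(X,a_1,\ldots,a_n)\strong\B$, whence $\hull{Xa}\subseteq Y$. The gap is in the reverse inclusion. A witnessing sequence requires, at each step, that one of $c_{k+1}$, $e^{c_{k+1}}$ be genuinely field-theoretically algebraic over the preceding field, but your construction only secures $\td(c_{k+1},e^{c_{k+1}}/X_k,\exp(X_k))=1$, i.e.\ $\delta(c_{k+1}/X_k)=0$. That is strictly weaker: it says $c_{k+1}$ and $e^{c_{k+1}}$ are algebraically \emph{dependent} over $\Q(X_k,\exp(X_k))$, while both may be transcendental over it (think of a relation like $c_{k+1}+e^{c_{k+1}}\in\Q(X_k,\exp(X_k))$). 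So the sequence $c_1,\ldots,c_d$ you produce need not be a witnessing sequence in the sense of the paper, and no contradiction with the minimality of $n$ is obtained.

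Moreover, the existence step you flag as the ``main obstacle'' is not just unproven but false at the level of generality at which you argue: a strong extension with $\delta(H/X_k)=0$ need not contain a one-dimensional subextension with $\delta=0$. For example, if $c_1,c_2$ satisfy $e^{c_1}=c_2$, $e^{c_2}=c_1$ with $c_1,c_2$ algebraically independent over $\Q(X_k,\exp(X_k))$, then $\delta(c_1,c_2/X_k)=0$, yet every $c\in\Span_\Q(X_k,c_1,c_2)\setminus X_k$ has $\td(c,e^c/X_k,\exp(X_k))=2$ (a Jacobian computation on $(u,v)\mapsto(q_1u+q_2v,\,u^{q_2}v^{q_1})$), and such configurations do arise in $\B$ as generic points of the free rotund variety $\{y_1=x_2,\ y_2=x_1\}$ over a strong base. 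Of course they cannot occur inside $\gen{X}^{\ELA}$ --- but that is essentially the content of the lemma, and your inductive step never uses that $H$ sits inside the span of the given witnessing sequence, so it cannot rule them out. The paper's proof avoids both problems by never building a new sequence: it takes a minimal counterexample, picks the least $j$ with $a_j\in\hull{Xa}$, and uses equality of transcendence degrees over $(H,\exp(H))$ and $(X,\exp(X))$ to get ACF forking independence, which transfers the actual algebraicity of one of $a_j,e^{a_j}$ down to $\Q(X,\exp(X))$; then $a_j$ can be moved to the front of the existing witnessing sequence, contradicting that its first term cannot lie in the hull. Finally, \cite[Proposition~3.22]{Bays-Kirby-exp} is about good bases (divisibility/coset issues) and does not supply the missing decomposition of a $\delta=0$ extension into algebraic one-dimensional steps.
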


\begin{proof}
Suppose not, and take a counterexample with minimal possible value of $n$. Then $H \leteq \hull{Xa}$ is a proper $\Q$-linear subspace of $\Span_\Q(X,a_1,\ldots,a_n)$, for the latter is strong in $\B$ and contains $X\cup \{ a \}$. Then $a_1 \notin H$, since otherwise we can replace $X$ by $\Span_\Q(X a_1)$ to reduce $n$.

{We have $a_n \in H$, so we can take $j$ least possible such that $a_j \in H$ (even allowing the sequence of $a_i$'s to change, but fixing $X$ and $a$). So $a_1,\ldots,a_{j-1}$ are $\Q$-linearly independent over $H$, but $H \strong \B$ and hence
\begin{align*}
    \td(a_1,\ldots,a_{j-1},e^{a_1},\ldots,e^{a_{j-1}}/H,\exp(H)) & = j-1\\
    & = \td(a_1,\ldots,a_{j-1},e^{a_1},\ldots,e^{a_{j-1}}/X,\exp(X)).
\end{align*}
So, considering forking independence in the field we have
\[a_1,\ldots,a_{j-1},e^{a_1},\ldots,e^{a_{j-1}} \ind_{X,\exp(X)}^{\mathrm{ACF}} H, \exp (H)\]
and in particular, as $a_j \in H$, by monotonicity and symmetry we have 
\[a_j,e^{a_j} \ind_{X,\exp(X)}^{\mathrm{ACF}} a_1,\ldots,a_{j-1},e^{a_1},\ldots,e^{a_{j-1}}.\]}

{Hence $\td(a_j,e^{a_j}/X,\exp(X)) = 1$ and so $\delta(a_j/X) = 0$. Therefore, we can reorder the witnessing sequence by moving $a_j$ to the start. But now we have $a_1 \in H$ which gives the same contradiction as above to the minimality of $n$.}
\end{proof}

\begin{lemma}\label{lem: e^a in span, a notin span}
    If $\bbar \strong \B$ with $\tau \in \bbar$ and $e^a\in \Span_{\Q}(\bbar)$ but $a\notin \Span_{\Q}(\bbar)$ then $a \notin \acl(\bbar)$.
\end{lemma}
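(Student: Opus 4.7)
The plan is to exhibit an infinite family of distinct realisations of $\tp(a/\bbar)$ in $\B$, whence $a \notin \acl(\bbar)$. The natural candidates are $a + n\tau$, $n \in \Z$: they are pairwise distinct, and since $\tau$ generates $\ker(\exp)$ they all share the same exponential $e^a$.

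First I would compute that $\delta(a/\bbar) = 0$. Writing $B := \Q(\Span_{\Q}(\bbar), \exp(\Span_{\Q}(\bbar)))$, the hypothesis $e^a \in \Span_{\Q}(\bbar)$ places $e^a$ in $B$, so $\td(a,e^a/B) = \td(a/B)$. Then $\bbar \strong \B$ together with $a \notin \Span_{\Q}(\bbar)$ forces $\td(a/B) = 1$ and $\delta(a/\bbar) = 0$. We are therefore in the situation of Proposition~\ref{prop: isolating types over strong sets}: after replacing $a$ by $a/m$ for some positive integer $m$ (which is harmless, since $a$ and $a/m$ are interdefinable over the empty set and hence have the same status with respect to $\acl(\bbar)$), the type $\tp(a/\bbar)$ is isolated by the formula $x \in V^{\dagger}_{\bbar}$ with $V = \loc(a, e^a/B)$.

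Next I would observe that after the rescaling, $a$ is still transcendental over $B$ while $(e^a)^m$ equals the original $e^a \in B$, so the new $e^a$ is algebraic over $B$ and $V$ has the shape $\A^1 \times Z$ with $Z \seq \gm$ the finite Galois orbit of $e^a$ over $B$. Because $\tau \in \bbar$ and $e^\tau = 1$, the pair $(a+n\tau, e^{a+n\tau}) = (a+n\tau, e^a)$ lies in $V$ for every $n \in \Z$, and since $\tau \in \Span_{\Q}(\bbar)$ while $a \notin \Span_{\Q}(\bbar)$, each $a+n\tau$ is $\Q$-linearly independent of $\bbar$. Hence $a+n\tau \in V^{\dagger}_{\bbar}$ for all $n$, yielding infinitely many distinct realisations of $\tp(a/\bbar)$. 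I do not anticipate any substantial difficulty here: the main work is already packaged in Proposition~\ref{prop: isolating types over strong sets}, and the rest is a routine check on the shape of $V$ and the linear independence of $a + n\tau$ over $\bbar$.
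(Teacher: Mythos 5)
Your proposal is correct and follows essentially the same route as the paper: compute $\delta(a/\bbar)=0$, invoke Proposition~\ref{prop: isolating types over strong sets} to isolate $\tp(a/\bbar)$ by $x\in V^{\dagger}_{\bbar}$ with $V$ cut out by conditions on the $\gm$-coordinate only, and then observe that the kernel translates $a+n\tau$ give infinitely many realisations. The only difference is cosmetic: you track the $a/m$ rescaling and the resulting finite Galois orbit of $e^{a/m}$ explicitly, while the paper simply takes $e^a=b_1$ and writes $V$ as $y=b_1$.
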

\begin{proof}
    We have \[ 0 \leq \delta(a/\bbar) = \td(a,e^a/ \bbar, e^{\bbar}) - \ldim_{\Q}(a/\bbar) = \td(a/\bbar, e^{\bbar})-1. \]
    So $a$ is transcendental over $\bbar, e^{\bbar}$. Without loss of generality assume that $e^a=b_1$. Hence the variety $V:=\Loc(a, e^a / \bbar, e^{\bbar})$ is defined by a single equation $y=b_1$. So by Proposition \ref{prop: isolating types over strong sets}, $\tp(a/\bbar)$ is isolated by the formula
    \[ e^x=b_1 \wedge x \notin \Span_{\Q}(\bbar). \]
    The realisations of this formula are $a+m\tau$ for $m\in \Z$, hence the type is not algebraic.
\end{proof}

\begin{proposition}\label{prop: EA acl=hull}
    If $B \strong \B$ and $B$ is an EA-subfield containing $\tau$ then $B = \acl(B)$.
\end{proposition}
\begin{proof}
    Pick an arbitrary $a\in \B \setminus B$. We will show that $a \notin \acl(B)$. By Proposition~\ref{prop: strong ELA B=acl(B)}, it suffices to consider $a \in \gen{B}^{\ELA}\setminus B$. Take a witnessing sequence $a_1,\ldots,a_n=a$ as above.
{Then by Lemma~\ref{hull in ELA-closure} we have $\hull{Ba} = \Span_\Q(B,a_1,\ldots,a_n)$.}

Since $B$ is an EA-subfield, we must have $e^{a_1}\in B$ and $a_1$ transcendental over $B$. So, by Lemma~\ref{lem: e^a in span, a notin span}, $a_1\notin \acl(B)$.

    However, by Proposition~\ref{prop: acl contains hull}, $a_1\in \acl(Ba)$. If $a\in \acl(B)$ then $\acl(Ba)=\acl(B)$, which gives a contradiction. So $a\notin \acl(B)$ as required.
\end{proof}

\begin{proof}[Proof of Theorem \ref{thm:main-thm-acl=hull-EA}]
Let $B\seq \B$. By Proposition \ref{prop: acl contains hull}, we have $\hull{B}^{\EA} \seq \acl(B)$, hence $\acl(B) = \acl(\hull{B}^{\EA})$. Then by Proposition \ref{prop: EA acl=hull}, $\acl(\hull{B}^{\EA}) = \hull{B}^{\EA}$.    
\end{proof}

\section{Discussion of closure operators}\label{sec: final comments}

\subsection{Closure operators on $\B$}

Having shown that $\acl(B) = \hull{B}^{\EA}$ for $B \subseteq \B$, an obvious question is whether $\acl$ generally admits a simpler description. For example, if $a = \log \log 2$ (any determination of this multivalued function) then it is easy to see that $\hull{a} = \Span_{\Q}(a,e^a)$, so $\hull{a} \subseteq \gen{a}^E$. Might it generally be the case that $\hull{B} \subseteq \gen{B}^{\EA}$?

In fact it is not generally true. For example, 
let $b = \exp(a^2)$, where $\exp(\exp(a))=2$ as above. Then $\hull{b} = \Span_{\Q}(b,a^2,a,e^a)$, but $a, a^2, e^a \notin \gen{b}^{\EA}$, although we do have $\hull{b} \subseteq \gen{b}^{\ELA}$.

\newcommand{\dsubseteq}{\rotatebox{90}{\ensuremath{\supseteq}}}
In fact, given a subset $B \subs \B$, we have various closures of $B$ forming the following diagram:
\[\begin{diagram}[height=20pt,width=30pt]
B & \subs & \gen{B}^\mathrm{E} &\subs &\gen{B}^{\EA} &\subs &\gen{B}^{\ELA} \\
\dsubseteq & & \dsubseteq & & \dsubseteq & & \dsubseteq \\
\hull{B} & \subs & \hull{B}^{\mathrm{E}} & \subs & \hull{B}^{\EA} & \subs & \hull{B}^{\ELA} & \subs & \ecl(B)\\
&&&& \rotatebox{90}{=}\\
&&&& \acl(B)
\end{diagram}\]

One can make any of the inclusions strict or non-strict by choosing a suitable $B$. Indeed, it is evident that the inclusions on the two horizontal rows are in general strict. The example above shows that the left three vertical inclusions can be made strict as well. 
The proof of Proposition~7.5 of \cite{Kirby-FPEF} gives an example of a point $a$ where $\hull{a} \nsubseteq \gen{a}^\ELA$, which then implies that the rightmost vertical inclusion can be made strict too. This example is somewhat technical, involving a point $(\abar,e^\abar)$ on the intersection of three generic hyperplanes in $\B^6$, so we do not give the details here.

Model theorists are particularly interested in those closure operators which are pregeometries, but of those in the diagram, it is easy to see that none but $\ecl$ satisfies the exchange property. The fact that $\ecl$ is a pregeometry on any exponential field is \cite[Theorem~1.1]{Kirby-Schanuel}.

Given the description of $\acl$ in $\B$, it is natural to ask for a characterisation of the definable closure $\dcl$.
It is immediate that $\gen{B}^\mathrm{E} \subs \dcl(B) \subs \acl(B) = \hull{B}^\EA$ but these bounds are not particularly good.

While neither $i$ nor $\tau$ is in $\dcl(\emptyset)$, being indistinguishable from $-i$ and $-\tau$ respectively, we can define the sine function $x \mapsto \sin(x) = \frac{e^{ix} - e^{-ix}}{2i}$ without the parameter $i$, due to the symmetry with $-i$. 
So $\dcl(B)$ is closed under the sine function and when $i \notin B$ this may take us outside $\gen{B}^E$. 
Furthermore, we saw in the introduction that in the complex exponential field $\{\pi,-\pi\}$ is definable, but we have $\sin(\pi/2) = 1$ and $\sin(-\pi/2) = -1$, so we can use this property to see that $\pi \in \dcl(\emptyset)$ in $\Cexp$. We can then use this to define $\pi$ in $\B$. Thus, $\pi \in \dcl(\emptyset)$ in $\B$ too.

Observe that $\hull{\pi} = \Span_{\Q}(\pi,\tau)$ and $\tau \notin \dcl(\pi)$, so we do not generally have $\hull{B} \subseteq \dcl(B)$.

Apart from sine, there are many more definable functions such as $x \mapsto e^{e^{\sqrt{x}}}+ e^{e^{-\sqrt{x}}}$, suggesting there is no simple description of $\dcl$ in $\B$.

\subsection{Non-standard kernel}

We have described $\acl$ on $\B$ itself, so an obvious question to ask is ``what about other models of its first-order theory?'' If $M$ is elementarily equivalent to $\B$ and also has standard kernel, then from \S2 of \cite{Kirby-pseudo} it follows that $M$ satisfies all the axioms we gave for $\B$ except perhaps the countable closure property, and the same proof works to show that $\acl(B) = \hull{B}^{\EA}$.
{On the other hand if $M$ has non-standard kernel then the Schanuel property may fail and then our methods would break down. If the multiplicative Zilber-Pink conjecture is true, then one can work with the Schanuel property ``over the kernel'', and combining the methods of this paper with those of \cite{Kirby-Zilber-exp} may answer the question.} Without assuming that conjecture the question seems much more difficult.

\subsection{Bounded closures}

One important role that $\acl$ plays in the model theory of complete first-order theories is to characterise those types which cannot have unboundedly many realisations in elementary extensions. We also call this the  \emph{bounded closure} $\bcl$. The natural axiomatisation of $\B$ we gave is in the logic $\Loo(Q)$, and without the countable closure property we get an axiomatisation in the logic $\Loo$.  In these logics, the compactness theorem does not hold, and $\acl$ and $\bcl$ can differ. It is known that for the $\Loo$-theory of $\B$, the bounded closure $\bcl(B) = \hull{B}^{\ELA}$, and for the $\Loo(Q)$-theory we have $\bcl(B) = \ecl(B)$.

\subsection{The complex case}

It is an open question whether or not $\R$ is definable in $\Cexp$. If it is, then $\Cexp$ is bi-interpretable with second order arithmetic, which has definable Skolem functions, and $\acl(B) = \dcl(B \cup \{i\})$ and is (bi-interpretable with) the smallest elementary submodel of second-order arithmetic containing $B$. This is the best upper bound we can get for $\acl(B)$ without something like the homogeneity obtained from Strong Exponential-Algebraic Closedness.

There is the obvious lower bound $\acl(B) \supseteq \gen{B \cup \{\pi\}}^\EA$ as discussed in the introduction. However, without the Schanuel Property, one does not have any control over $\hull{\bbar}$ for a finite tuple $\bbar$, and so the arguments of this paper do not show that $\hull{\bbar} \subseteq \acl(\bbar)$. Things are better if we work over $\hull{\emptyset}$, and we expect that the methods of this paper would show that if $B \supseteq \hull{\emptyset}$ then $\acl(B) \supseteq \hull{B}^{\EA}$. If Schanuel's conjecture is true then $\hull{\emptyset} = 2\pi i \Q$ and we recover this lower bound for all $B \subseteq \C$.

\subsection*{Acknowledgement.} {We thank the referee for useful remarks that helped us improve the presentation.}

\bibliographystyle{alpha}
\bibliography{ref}

\end{document}